\def\CC{{\mathbb{C}}}
\def\R{{\mathbb{R}}}
\def\NN{{\mathbb{N}}}
\newtheorem{theo}{Theorem}[section]
\newtheorem{prop}[theo]{Proposition}
\newtheorem{cor}[theo]{Corollary}
\newtheorem{hyp}[theo]{Hypothesis}
\theoremstyle{definition}
\newtheorem{rem}[theo]{Remark}
\newtheorem{defi}[theo]{Definition}
\def\calL{{\mathcal{L}}}
\def\calA{{\mathcal{A}}}
\def\R{\mathbb R}
\def\N{\mathbb N}
\newcommand{\norm}[1]{\left\Vert#1\right\Vert}
\newcommand{\abs}[1]{\left\vert#1\right\vert}
\let\div\undefined
\DeclareMathOperator{\div}{\mathrm{div}}
\newcommand{\bpr}{\begin{proof}}  
\newcommand{\epr}{\end{proof}}
\newcommand{\eps}{\varepsilon}
\begin{document}

\numberwithin{equation}{section}

\title[Kernel estimates of Schr\"odinger type operators]{General Kernel estimates of Schr\"odinger type operators with unbounded diffusion terms}
\author{Loredana Caso, Markus Kunze, Marianna Porfido and Abdelaziz Rhandi}
\address{M. Kunze: Fachbereich Mahematik und Statistik, Universit\"at Konstanz, 78457 Konstanz, Germany}
\address{L. Caso, M. Porfido, A. Rhandi: Dipartimento di Matematica, Università degli Studi di Salerno, Via Giovanni Paolo II, 132, 84084 Fisciano (Sa), Italy}
\thanks{The first, the third and the fourth authors are members of the
Gruppo Nazionale per l'Analisi Matematica, la Probabilità e le loro Applicazioni (GNAMPA) of
the Istituto Nazionale di Alta Matematica (INdAM). This article is based upon work
from COST Action 18232 MAT-DYN-NET, supported by COST (European Cooperation in Science
and Technology), www.cost.eu}

\keywords{Schr\"odinger type operator, unbounded coefficients, kernel estimates, ultracontractive semigroup}
\subjclass[2020]{35K08, 35P15, 35K10}

\begin{abstract}
We prove first that the realization $A_{\min}$ of $A:={\div}(Q\nabla)-V$ in $L^2(\R^d)$ with unbounded coefficients generates a symmetric sub-Markovian and ultracontractive semigroup on $L^2(\R^d)$ which coincides on $L^2(\R^d)\cap C_b(\R^d)$ with the minimal semigroup generated by a realization of $A$ on $C_b(\R^d)$. Moreover, using time dependent Lyapunov functions, we prove pointwise upper bounds for the heat kernel of $A$ and deduce some spectral properties of $A_{\min}$ in the case of polynomially and exponentially diffusion and potential coefficients.
\end{abstract}

\maketitle

\section{Introduction}

In this article, we are concerned with Schr\"odinger type operators of the form
\begin{equation}\label{eq.div}
A\varphi ={\div}(Q\nabla \varphi)-V\varphi , \quad \varphi \in C^2(\R^d),
\end{equation}
where the diffusion coefficients $Q$, and the potential $V$ are typically unbounded functions. Throughout, we make the following assumptions on $Q$ and $V$.

\begin{hyp}\label{h.1}
We have $Q=(q_{ij})_{i,j=1, \ldots, d} \in C^{1+\zeta}(\R^d; \R^{d\times d})$ and $0\leq V \in C^\zeta(\R^d)$ for some $\zeta \in (0,1)$. Moreover, 
\begin{enumerate}
\item The matrix $Q$ is symmetric and uniformly elliptic, i.e. there is $\eta>0$ such that
\[
\sum_{i,j=1}^dq_{ij}(x)\xi_i\xi_j\ge \eta |\xi|^2 \quad \hbox{for all }x,\,\xi\in \R^d;
\]
\item there is $0\le Z\in C^2(\R^d)$ and a constant $M\ge 0$ such that $\lim_{|x|\to \infty}Z(x)=\infty$, $AZ(x)\le M$ and $\eta\Delta Z(x)-V(x)Z(x)\leq M$ for all $x\in \R^d$.
\end{enumerate}
\end{hyp}

In these last years  second order elliptic operators with polynomially growing coefficients and their associated semigroups have recived a lot of attention, see for example \cite{boutiah_et_al}, \cite{BRC19}, \cite{BCGT21}, \cite{AC-AR-CT}, \cite{AC2-AR2-CT2}, \cite{F-L}, \cite{G-S}, \cite{G-S-3}, \cite{Luca-Abde}, \cite{Me-Sp-Ta}, \cite{Me-Sp-Ta2}, \cite{Lorenzi} and the references therein.

Concerning the above operator $A$, it is well known (see \cite[Theorem 2.2.5]{Lorenzi} and \cite{MPW}) that, assuming Hypothesis \ref{h.1}, 
a suitable realization of $A$ generates a semigroup $T=(T(t))_{t\geq 0}$ on the space $C_b(\R^d)$ that is given through an integral kernel; more precisely, 
\[
T(t)f(x)=\int_{\R^d}p(t,x,y)f(y)\,dy,\quad t>0,\,x\in \R^d,\,f\in C_b(\R^d),
\]
where the kernel $p$ is positive, $p(t,\cdot ,\cdot)$ and $p(t,x,\cdot)$ are measurable for any $t>0,\,x\in \R^d$, and for a.e. fixed $y\in \R^d,\,p(\cdot ,\cdot ,y)\in C_{\mathrm{loc}}^{1+\frac{\zeta}{2},2+\zeta}((0,\infty)\times \R^d)$.\smallskip

As it can be seen in Section \ref{S2}, this semigroup can be extended to a symmetric sub-Markovian and ultracontractive $C_0$-semigroup on $L^2(\R^d)$ and classical results show that this semigroup extrapolates to a positive $C_0$-semigroup of contractions in all $L^p(\R^d)$, $p\in [1,\infty)$. Moreover in the examples considered in Section \ref{S4}, these semigroups are compact and the spectrum of their corresponding generators are independent of $p$.

Our second focus in this article lies in proving pointwise upper bounds for the kernel $p$. The case of (non-divergence type) Schr\"odinger operators
\begin{equation}
\label{eq.nondiv}
(1+|x|^m)\Delta - |x|^s
\end{equation}
was discussed extensively in the literature and may serve as a model case. In this case, kernel estimates were obtained in \cite{AC2-AR2-CT2}
(see also \cite{BRC19} from which kernel estimates for the corresponding divergence form operators can be deduced) assuming that $m>2$ and $s>m-2$. The case $m\in [0,2)$ and $s>2$ was treated in \cite{Luca-Abde}. Let us also mention that for $m=0$ and $s>0$ both upper and lower estimates were established in \cite{Meta-Sp}. In the case of  $V\equiv 0$, similar kernel estimates were obtained in \cite{Me-Sp-Ta}.

As far as more general operators are concerned, in particular the case of bounded  diffusion coefficients has recived a lot of attention, see
\cite{ALR10}, \cite{BRS06}, \cite{LMPR11}, \cite{MPR}. These techniques were extended to include also unbounded diffusion coefficients in 
\cite{KunzeLorenziRhandi1, KunzeLorenziRhandi2}.\smallskip

In this article, we adopt the technique of time dependent Lyapunov functions used in \cite{Spina08}, \cite{ALR10}, \cite{KunzeLorenziRhandi1}, \cite{KunzeLorenziRhandi2} to our divergence form setting. This allows for a unified approach to obtain kernel bounds corresponding to \cite{BRC19}, \cite{Meta-Sp} in the divergence form setting. As a matter of fact, we can allow even more general conditions on $m$ and $s$, requiring merely that $m>0$ and $s>|m-2|$; moreover, we can drop the assumption $d\geq 3$ imposed in \cite{BRC19, Meta-Sp}.

As our approach does not depend on the specific structure of the coefficients, we can establish kernel estimates not only in the case where $Q(x) = (1+|x|^m)I$; an estimate of the quadratic form associated to $Q$ is enough, cf.\ Equation \eqref{Hp1 Prop: Time dependent Lyapunov functions in case of polynomially growing diffusions}. Moreover, we can even leave the setting of polynomially growing  coefficients and consider coefficients of \emph{exponential growth}; this includes the case $Q(x) = e^{|x|^m}I$ and $V(x) = e^{|x|^s}$ for $d\geq 1$ and $2\leq m < s$. Here we would like to mention the paper \cite{FFMP} where pointwise estimates are obtained in the elliptic case for exponentially growing coefficients. We stress that these estimates can be improved by choosing a Lyapunov function as in Subsection \ref{S52}. \smallskip

This article is organized as follows. In Section \ref{S2},  we adapt the techniques in \cite{AMP} to prove that a realization of $A$ in $L^2(\R^d)$ generates a symmetric sub-Markovian and ultracontractive semigroup $T_2(\cdot)$ on $L^2(\R^d)$ which coincides with the semigroup $T(\cdot)$ on $L^2(\R^d)\cap C_b(\R^d)$. In Section \ref{S3} we introduce time dependent Lyapunov functions and establish sufficient conditions under which certain exponential functions are time dependent Lyapunov functions in the case of polynomially and exponentially growing diffusion coefficients. In the subsequent Section \ref{S4}, we use these results to prove upper kernel estimates for our divergence form operator $A$. In the concluding Section \ref{S53}, we present some consequences of our result for the spectrum and the eigenfunctions of the operator $A_{\min}$ from Section \ref{S2}.

\subsection*{Notation}
$B_r$ denotes the open ball of $\R^d$ of radius $r$ and center $0$. For $0\leq a<b$, we write $Q(a,b)$ for $(a,b)\times \R^d$.

If $u: J\times \R^d\to \R$, where $J\subset [0,\infty)$ is an
interval, we use the following notation:
\begin{align*}
\partial_t u =&\frac{\partial u}{\partial t}, \
D_iu=\frac{\partial u}{\partial x_i}, \
D_{ij}u=D_iD_ju
\\
\nabla u=&(D_1u, \dots, D_du), \ {\div}(F)=\sum_{i=1}^dD_iF_i \hbox{\ for }F:\R^d\to \R^d,
\end{align*}
and
$$
|\nabla u|^2=\sum_{j=1}^d |D_j u|^2 , \qquad
|D^2u|^2=\sum_{i,j=1}^N|D_{ij} u|^2.
$$
Let us come to notation for function spaces.  $C_b(\R^d)$ is the
space of bounded and continuous functions in $\R^d$.  $\mathcal{D}(\R^d)$ is the space
of test functions. $C^\alpha(\R^d)$ denotes the space of all
$\alpha$-H\"older continuous functions on $\R^d$. $C^{1,2}(Q(a,b))$ is the space of all functions $u$ such that $\partial_tu$, $D_iu$ and $D_{ij}u$ are continuous in $Q(a,b)$.

 For $\Omega \subseteq \R^d,\,1\le k\le \infty,\,j\in \N,\,W^{j}_{k}(\Omega)$
denotes the classical Sobolev space of all $L^k$--functions having
weak derivatives in $L^k(\Omega)$ up to the order $j$.  Its usual
norm is denoted by $\|\cdot \|_{j,k}$ and by $\|\cdot \|_k$ when
$j=0$. When $k=2$ we set $H^j(\Omega):=W^{j}_{2}(\Omega)$ and $H_0^1(\Omega)$ denotes the closure of 
the set of test functions on $\Omega$ with respect to the norm of $H^1(\Omega)$.

For $0<\alpha \le 1$ we denote by
$C^{1+\alpha/2 ,2+\alpha}(Q(a,b))$ the space of all
functions $u$ such that $\partial_tu$, $D_iu$ and $D_{ij}u$ are 
$\alpha$-H\"older continuous in $Q(a,b)$ with respect to
the parabolic distance $d((t,x),(s,y)):=|x-y|+|t-s|^{\frac{1}{2}}$.
Local H\"older spaces are defined, as usual, requiring that the
H\"older condition holds in every compact subset.

\section{Generation of semigroups on \texorpdfstring{$L^2(\R^d)$}{}}\label{S2}

In this section we show that a realization of $A$ in $L^2(\R^d)$ generates a symmetric sub-Markovian and ultracontractive semigroup $T_2(\cdot)$ on $L^2(\R^d)$ which coincides with the semigroup $T(\cdot)$ on $L^2(\R^d)\cap C_b(\R^d)$.

We recall that, given $\Omega \subset \R^d$, a $C_0$-semigroup $S(\cdot)$ on $L^2(\Omega)$ is called \emph{sub-Markovian} if $S(\cdot)$ is a positive semigroup, i.e.\ $S(t)f\geq 0$ for all $t\geq 0$ and $f\geq 0$, and \emph{$L^\infty$-contractive}, i.e.
\[
\|S(t)f\|_\infty \le \|f\|_\infty,\quad \forall t\ge 0,\,f\in L^2(\Omega)\cap L^\infty(\Omega).
\]
It is called \emph{ultracontractive}, if there is a constant $c>0$ such that
\[
\|S(t)\|_{\calL(L^1, L^\infty)}\leq c t^{-\frac{d}{2}},
\]
for all $t>0$.

To establish ultracontractivity we use the following useful result, see \cite[Proposition 1.5]{AMP}, where we replace the $H^1$-norm with the $L^2$-norm of the gradient. The proof remains the same and is based on Nash's inequality
\[
\|u\|^{1+\frac{2}{d}}_2 \leq c_d\| |\nabla u|\|_2\|u\|_1^{\frac{2}{d}}
\]
for all $u\in L^1(\R^d)\cap H^1(\R^d)$.

\begin{prop}\label{ultra-arendt}
Let $S(\cdot)$ be a $C_0$-semigroup on $L^2(\R^d)$ such that $S(\cdot)$ and $S^*(\cdot)$ are sub-Markovian. Assume that, for $\delta>0$, the generator $B$ of $S(\cdot)$ satisfies
\begin{enumerate}
\item $D(B)\subset H^1(\R^d)$;
\item $\langle -Bu,u\rangle\geq \delta \||\nabla u|\|_2^2 ,\quad \forall u\in D(B)$;
\item $\langle -B^*u,u\rangle\geq \delta \||\nabla u|\|_2^2 ,\quad \forall u\in D(B^*)$.
\end{enumerate}
Then there is $c_\delta >0$ such that
\[
\norm{S(t)}_{\calL(L^1,L^\infty)}\leq c_\delta t^{-d/2} ,\quad \forall t>0,
\]
i.e.\ $S(\cdot)$ is ultracontractive.
\end{prop}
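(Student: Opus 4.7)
The plan is to follow the classical Nash approach adapted to this setting. First, I would note that since the adjoint semigroup $S^*(\cdot)$ is sub-Markovian, in particular $L^\infty$-contractive, by duality $S(\cdot)$ extends to a contraction semigroup on $L^1(\R^d)$. Hence for every $0\le u \in L^1(\R^d)\cap L^2(\R^d)$ one has $\|S(t)u\|_1 \le \|u\|_1$ for all $t \ge 0$.

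Next, fix $0 \le u \in L^1(\R^d)\cap D(B)$. Since $D(B)\subset H^1(\R^d)$ by assumption (a), the function $\phi(t):= \|S(t)u\|_2^2$ is differentiable for $t>0$ with
\[
\phi'(t) = 2\langle BS(t)u, S(t)u\rangle \le -2\delta \,\||\nabla S(t)u|\|_2^2
\]
by assumption (b). Combining this with Nash's inequality applied to $S(t)u$ and using $\|S(t)u\|_1 \le \|u\|_1$, I would obtain the differential inequality
\[
\phi'(t) \le -\frac{2\delta}{c_d^2}\, \phi(t)^{1+2/d}\,\|u\|_1^{-4/d}.
\]
Setting $\psi := \phi^{-2/d}$ turns this into $\psi'(t) \ge \frac{4\delta}{d\,c_d^2}\|u\|_1^{-4/d}$. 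Integrating from $0$ to $t$ and dropping $\psi(0)\ge 0$ yields $\|S(t)u\|_2 \le C_1 t^{-d/4}\|u\|_1$, first for $u\in D(B)\cap L^1$ and then, by density and the $L^1$-contractivity above, for all $u\in L^1(\R^d)\cap L^2(\R^d)$.

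The same argument, using assumption (c) in place of (b), applied to the semigroup $S^*(\cdot)$ gives $\|S^*(t)\|_{\calL(L^1,L^2)} \le C_1 t^{-d/4}$, which by duality translates into $\|S(t)\|_{\calL(L^2,L^\infty)} \le C_1 t^{-d/4}$. The semigroup law $S(t)=S(t/2)S(t/2)$ then gives
\[
\|S(t)\|_{\calL(L^1,L^\infty)} \le \|S(t/2)\|_{\calL(L^2,L^\infty)}\,\|S(t/2)\|_{\calL(L^1,L^2)} \le c_\delta\, t^{-d/2},
\]
as claimed. I expect the only mildly delicate point to be the justification of differentiating $\phi(t)$ and the density arguments needed to pass from the core $D(B)\cap L^1(\R^d)$ to general $L^1\cap L^2$; both are routine in the sub-Markovian setting and essentially identical to the proof of Proposition~1.5 in \cite{AMP}, the only modification being that Nash's inequality involves just $\||\nabla u|\|_2$ rather than the full $H^1$-norm, which is exactly what assumptions (b) and (c) control.
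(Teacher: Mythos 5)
Your argument is correct and is precisely the classical Nash-inequality iteration that the paper invokes by citing \cite[Proposition 1.5]{AMP}: $L^1$-contractivity from sub-Markovianity of the adjoint, the differential inequality for $\|S(t)u\|_2^2$ via assumption (b) and Nash, the resulting $L^1\to L^2$ bound of order $t^{-d/4}$, the dual bound from assumption (c), and composition via the semigroup law. The density and differentiability points you flag are indeed routine, so there is nothing to add.
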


We now take up our main line of study and consider the elliptic operator $\calA$, defined by
\[
\calA : H^1_{\rm{loc}}(\R^d)\to\mathcal{D}(\R^d)',\quad \mathcal{A}\varphi={\div}(Q\nabla \varphi)-V\varphi.
\]
Its maximal realization $A_{\max}$ in $L^2(\R^d)$ is defined by
\begin{align*}
D(A_{\max}) & = \lbrace u\in L^2(\R^d)\cap H^1_{\rm{loc}}(\R^d), \,\calA u\in L^2(\R^d)\rbrace,\\
A_{\max} u & = \calA u.
\end{align*}
There is also a minimal realization $A_{\min}$ of $\calA$.
The minimal realization of $\calA$ in $L^2(\R^d)$ is the operator presented in the following theorem.

\begin{theo}\label{Thm: minimal realization}
There exists a unique operator $A_{\min}$ on $L^2(\R^d)$ such that 
\begin{enumerate}
\item $A_{\min}\subset A_{\rm max}$;
\item $A_{\min}$ generates a positive, symmetric $C_0$-semigroup $T_2(\cdot)$ on $L^2(\R^d)$;
\item if $B\subset A_{\rm max}$ generates a positive $C_0$-semigroup $S(\cdot)$, then $T_2(t)\leq S(t)$ for all $t\geq 0$.
\end{enumerate}
The operator $A_{\min}$ and the semigroup $T_2(\cdot)$ have the following additional properties:
\begin{enumerate}
\setcounter{enumi}{3}
\item $D(A_{\min})\subset H^1(\R^d)$ and $-\langle A_{\min}u,u\rangle \geq \eta \| |\nabla u|\|_2^2$ for all $u\in D(A_{\min})$;
\item $T_2(\cdot)$ is sub-Markovian and ultracontractive;
\item  the semigroup $T_2(\cdot)$ is consistent with $T(\cdot)$, i.e.
\[
T_2(t)f=T(t)f,\quad t\ge 0,\,f\in L^2(\R^d)\cap C_b(\R^d).
\]
\end{enumerate}
\end{theo}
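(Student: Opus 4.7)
The plan is to construct $A_{\min}$ from the quadratic form
\[
a(u,v):=\int_{\R^d}\langle Q\nabla u,\nabla v\rangle\,dx+\int_{\R^d}Vuv\,dx,\qquad u,v\in C_c^\infty(\R^d),
\]
which by Hypothesis \ref{h.1}(i) and $V\geq 0$ is symmetric and nonnegative, hence closable in $L^2(\R^d)$. Let $\bar a$ denote its closure with form domain $\calV$, and let $A_{\min}$ be the associated self-adjoint operator. The ellipticity bound $\bar a(u,u)\geq \eta\||\nabla u|\|_2^2$ forces $\calV\subset H^1(\R^d)$ and delivers property (d) at once; (b), symmetry, and positivity of $T_2(\cdot)$ are automatic from form theory. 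For (e), sub-Markovianity follows from $\bar a$ being a Dirichlet form (normal contractions act on $C_c^\infty(\R^d)$ and the property passes to the closure), while ultracontractivity comes from Proposition \ref{ultra-arendt} with $\delta=\eta$, self-adjointness reducing hypothesis (iii) to (ii). Property (a) comes from testing: for $u\in D(A_{\min})$ and $\varphi\in C_c^\infty(\R^d)\subset\calV$, the identity $\langle A_{\min}u,\varphi\rangle=-\bar a(u,\varphi)$ unfolds into the distributional equation $\calA u=A_{\min}u\in L^2(\R^d)$, so $u\in D(A_{\max})$ and $A_{\max}u=A_{\min}u$.

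For the minimality property (c) I would approximate by Dirichlet realizations on balls. Let $A_n$ be the self-adjoint operator on $L^2(B_n)$ associated with the restriction of $a$ to $H_0^1(B_n)$, with semigroup $T_n(\cdot)$ extended by zero outside $B_n$. Since $H_0^1(B_n)$ is an increasing family of closed subspaces of $\calV$ whose union contains $C_c^\infty(\R^d)$ (hence a core for $\bar a$), monotone form convergence of Kato--Ouhabaz type yields $T_n(t)f\uparrow T_2(t)f$ in $L^2$ for $0\leq f\in L^2(\R^d)$, and in particular $R(\lambda,A_n)g\uparrow R(\lambda,A_{\min})g$ for $\lambda>0$ and $0\leq g$. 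Now if $B\subset A_{\max}$ generates a positive $C_0$-semigroup $S(\cdot)$, pick large $\lambda$ and set $w:=R(\lambda,B)g\geq 0$ for $0\leq g\in C_c(\R^d)$ supported in $B_n$, and $u_n:=R(\lambda,A_n)g$ (extended by zero). Then $\lambda(w-u_n)-\calA(w-u_n)=0$ on $B_n$ with $w-u_n=w\geq 0$ on $\partial B_n$; local H\"older regularity of weak solutions (valid since $Q\in C^{1+\zeta}$, $V\in C^\zeta$) together with the weak maximum principle for $-\div(Q\nabla\cdot)+(\lambda+V)\cdot$ forces $u_n\leq w$. Passing to $n\to\infty$ and using density of $C_c$ in $L^2_+$ gives $R(\lambda,A_{\min})\leq R(\lambda,B)$ on nonnegative data, and the Post--Widder inversion then yields $T_2(t)\leq S(t)$.

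Property (f) follows because each of our $A_n$ agrees with the Dirichlet realization of $\calA$ on $B_n$ used in \cite{Lorenzi,MPW} to construct the minimal semigroup $T(\cdot)$ on $C_b(\R^d)$: both generate the symmetric semigroup with the same smooth heat kernel on $B_n$, so the monotone limits $T_n(t)f\to T_2(t)f$ in $L^2(\R^d)$ and $T_n(t)f\to T(t)f$ in $C_b(\R^d)$ coincide on $L^2(\R^d)\cap C_b(\R^d)$. Uniqueness of $A_{\min}$ is then immediate from (c): any $A'$ satisfying (a)--(c) generates a semigroup simultaneously dominated by and dominating $T_2(\cdot)$.

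The main obstacle I anticipate is making the maximum-principle comparison in (c) rigorous. A priori $w=R(\lambda,B)g$ lies only in $D(A_{\max})\subset H^1_{\mathrm{loc}}(\R^d)$, and reading off boundary values on $\partial B_n$ requires continuity of $w$ up to $\partial B_n$; this relies on the standard interior and boundary Schauder bootstrap for $\calA$, available from the $C^{1+\zeta}$/$C^\zeta$ hypothesis on $(Q,V)$. A secondary bookkeeping point is the identification of our form-based $A_n$ with the Dirichlet realizations used in \cite{Lorenzi,MPW}, which rests on the uniqueness of the self-adjoint extension with the prescribed symmetric heat kernel.
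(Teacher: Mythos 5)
Your construction is correct but genuinely different from the paper's. You build $A_{\min}$ ``from above'' as the operator associated with the closure $\bar a$ of the minimal form on $C_c^\infty(\R^d)$, and then recover the ball approximation only where you need it (for minimality and for consistency with $T(\cdot)$), invoking monotone form convergence to identify the limit of the Dirichlet semigroups $T_n(\cdot)$ with $T_2(\cdot)$. The paper instead \emph{defines} $T_2(t)f:=\sup_n T^{(n)}(t)f$ and must then work to prove strong continuity of the limit semigroup and to identify its generator (Steps 2--3 of the paper's proof); in exchange it never needs closability of the unrestricted form nor the Kato--Simon--Ouhabaz monotone convergence theorem. Your route makes (b), (d), symmetry, positivity, sub-Markovianity and property (a) essentially formal consequences of form theory, which is a real simplification. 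For the minimality property (c) the two arguments diverge more substantially: the paper stays entirely in the weak formulation and tests the resolvent identity with $(u_n-v)^+$, which lies in $H^1_0(B_{\rho_n})$ by the ideal property since $0\le (u_n-v)^+\le u_n$; this avoids any regularity theory. Your maximum-principle comparison also works, but the Schauder/continuity bootstrap you flag as the main obstacle is unnecessary --- the weak maximum principle you need is exactly the statement that $(u_n-w)^+\in H^1_0(B_n)$ can be used as a test function, and that membership follows from the ideal property rather than from boundary regularity of $u_n$ and interior regularity of $w$. I recommend replacing the regularity argument by this test-function argument; it is both shorter and valid for $g\in L^2_+$ directly, so you can also drop the density step.

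One genuine (though standard and repairable) gap: ``symmetric and nonnegative, hence closable'' is false for quadratic forms in general. The form $a$ on $C_c^\infty(\R^d)$ \emph{is} closable, but because it is generated by the densely defined symmetric nonnegative operator $-\calA|_{C_c^\infty}$ (Kato, Thm.\ VI.1.27), or, more directly, because $a(u,u)\ge\eta\||\nabla u|\|_2^2$ together with the local boundedness of $Q$ and $V$ lets one check the closability criterion by hand. Without this correction the object $\bar a$, and hence your $A_{\min}$, is not defined. The remaining steps (Post--Widder/Euler inversion from the resolvent ordering, the identification of the form-based Dirichlet realizations on balls with those of \cite{Lorenzi,MPW} for property (f), and uniqueness from the two-sided domination) match the paper's reasoning.
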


\begin{proof}
We adapt the proof of Theorem 1.1, Proposition 1.2 and Proposition 1.3 in \cite{AMP} to our situation. For the reader’s convenience we provide  the details.

\emph{Step 1.} We define approximate semigroups $T^{(\rho)}(\cdot)$ on $L^2(B_\rho)$.

To that end, consider the bilinear form $\mathfrak{a}_\rho : H^1_0(B_\rho)\times H^1_0(B_\rho) \to \CC$, defined by
\[
\mathfrak{a}_\rho [u,v]= \int_{B_\rho} \sum_{i,j=1}^d q_{ij}D_i u D_j\bar v\, dx + \int_{B_\rho} Vu\bar v\, dx.
\]
This form is obviously symmetric. Using that $Q$ and $V$ are bounded on $B_\rho$, an easy application of H\"older's inequality shows that
$\mathfrak{a}_\rho$ is continuous. Moreover, the positivity of $V$, the uniform ellipticity of $Q$ and Poincar\'e's inequality yield coercivity
of $\mathfrak{a}_\rho$. Now standard theory, see \cite[Proposition 1.51]{Ouhabaz} implies that the associated operator $A_\rho$ generates
a strongly continuous semigroup $T^{(\rho)}(\cdot)$ on $L^2(B_\rho)$. Making use of the Beurling--Deny criteria (see, e.g., Corollary 4.3 and Theorem 4.7 in \cite{Ouhabaz}) we see that the semigroup $T^{(\rho)}(\cdot)$ is sub-Markovian.\smallskip

\emph{Step 2.} We prove that the semigroups $T^{(\rho)}(\cdot)$ are increasing to a semigroup $T_2(\cdot)$.

We now consider functions on $B_\rho$ to be defined on all of $\R^d$, by extending them with $0$ outside of $B_\rho$. Then, for any
$0<\rho_1 <\rho_2$, the space $H^1_0(B_{\rho_1})$ is an ideal in $H^1_0(B_{\rho_2})$. Thus, by \cite[Corollary B.3]{St-Vo} (see also \cite[Section 2.3]{Ouhabaz}), we have $T^{(\rho_1)} (t) \leq T^{(\rho_2)}(t)$ for all $t\geq 0$. As every semigroup $T^{(\rho)}(\cdot)$ is sub-Markovian and thus contractive, we may define
\[
T_2(t)f  := \sup_{n\in \NN} T^{(n)}(t)f
\]
for $0\leq f\in L^2(\R^d)$ and then $T_2(t)f := T_2(t)f^+-T_2(t)f^-$ for general $f\in L^2(\R^d)$. An easy computation shows that
$T_2(\cdot)$ is a positive contraction semigroup. We prove that $T_2(\cdot)$ is strongly continuous. To that end, fix
 $0\leq f\in \mathcal{D}(\R^d)$, and $\rho>0$ such that $\mathrm{supp} f\subset B_\rho$. Let $t_n\downarrow 0$. Then 
\begin{align*}
&\limsup_{n\to\infty} \|T^{(\rho)}(t_n)f-T_2(t_n)f\|_2^2\\
= &\limsup_{n\to\infty}\left[\|T^{(\rho)}(t_n)f\|_2^2 + \|T_2(t_n)f\|_2^2 - 2 \langle T^{(\rho)}(t_n)f, T_2(t_n)f\rangle_2\right]\\
\leq & \limsup_{n\to\infty} \left[2\norm{f}_2^2 -2 \langle T^{(\rho)}(t_n)f, T^{(\rho)}(t_n)f\rangle_2\right]= 2\|f\|_2^2 - 2\|f\|_2^2=0.
\end{align*}
Here, in the third line we have used the contractivity of $T^{(\rho)}(\cdot)$ and $T_2(\cdot)$, that  $0\leq T^{(\rho)}(t_n)f \leq T_2(t_n)f$ and the strong continuity of $T^{(\rho)}(\cdot)$. Thus, $T_2(t_n)f\to f$ as $n\to \infty$. Splitting $f\in \mathcal{D}(\R^d)$ into positive and negative part, 
we see that this is true for general $f$. In view of the contractivity of $T_2(\cdot)$, a standard $3\eps$ argument yields strong continuity of $T_2(\cdot)$.

As the form $\mathfrak{a}_\rho$ is symmetric, the semigroup $T^{(\rho)}(\cdot)$ consists of symmetric operators and thus, so does the limit semigroup $T_2(\cdot)$. Likewise, sub-Markovianity of $T_2(\cdot)$ is inherited by that of $T^{(\rho)}(\cdot)$.
\smallskip

\emph{Step 3.} We identify the generator $A_{\min}$ of $T_2(\cdot)$.

Let us first note that $R(\lambda, A_\rho)f \to R(\lambda, A_{\min})f$ as $\rho\to \infty$ for every $\lambda >0$; this follows from the construction of $T_2(\cdot)$ by taking Laplace transforms and using dominated convergence. Now fix a sequence $\rho_n\uparrow \infty$ and $f\in L^2(\R^d)$. We put $u=R(1, A_{\min})f$ and $u_n = R(1, A_{\rho_n})f$. Then $u_n\to u$ and $A_{\rho_n}u_n = u_n - f \to u-f = A_{\min}u$
in $L^2(\R^d)$ as $n\to \infty$. By coercivity of the form $\mathfrak{a}_{\rho_n}$, we have
\begin{equation}\label{eq.limsup}
\eta \limsup_{n\to\infty} \int |\nabla u_n|^2\, dx \leq \limsup_{n\to\infty} \mathfrak{a}_{\rho_n}[u_n, u_n] =
\limsup_{n\to\infty} - \langle A_{\rho_n} u_n, u_n\rangle = - \langle A_{\min}u, u\rangle.
\end{equation}

It follows that $(u_n)_{n\in\N}$ is a bounded sequence in $H^1(\R^d)$ and thus, by reflexivity of $H^1(\R^d)$,
 $u_n\to u$ weakly in $H^1(\R^d)$. Thus, $D(A_{\min})\subset H^1(\R^d)$. Moreover, using the weak lower semicontinuity of norms, we see that \eqref{eq.limsup} implies $-\langle A_{\min} u, u\rangle \geq \eta \||\nabla u|\|_2^2$. 
 
Now fix $v\in \mathcal{D}(\R^d)$. As $u_n$ converges to $u$ weakly in $H^1(\R^d)$, we see that
\[
\langle \calA u, v\rangle = \lim_{n\to\infty} \langle \calA u_n, v\rangle = \lim_{n\to\infty} \langle A_{\rho_n}u_n, v\rangle = \langle A_{\min}u , v\rangle,
\]
proving $A_{\min}\subset A_{\max}$. At this point, properties (a), (d) and (by definition of $A_{\min}$) (b) are proved.\smallskip

\emph{Step 4}. We establish the minimality property.

Thus,  let $B\subset A_{\max}$ such that $B$ generates a positive $C_0$-semigroup $S(\cdot)$ on $L^2(\R^d)$. To prove
$T_2(t)\leq S(t)$ for all $t\geq 0$ it suffices to prove $R(\lambda, A_{\min}) \leq R(\lambda, B)$ for all $\lambda >0$; this is an easy consequence of Euler's formula. 

To see this, let us fix again a sequence $\rho_n \uparrow \infty$, $\lambda >0$ and $0\leq f\in L^2(\R^d)$. We put
$u= R(\lambda, A_{\min})f$, $v= R(\lambda, B)f$ and $u_n = R(\lambda, A_{\rho_n})f$. As $B \subset A_{\max}$, we have 
$v\in H^1_{\mathrm{loc}}(\R^d)$ and 
\begin{equation}\label{eq.variational}
\lambda\int_{B_{\rho_n}}(u_n-v) w\, dx+ \int_{B_{\rho_n}} \sum_{i,j=1}^d q_{ij}D_j(u_n-v)D_iw\, dx + \int_{B_{\rho_n}} V(u_n-v) w\, dx = 0
\end{equation}
for all $w\in H^1_0(B_{\rho_n})$. As the semigroup $S(\cdot)$ is positive, $v \geq 0$ and thus $(u_n -v)^+ \leq u_n$. As $H^1_0(B_{\rho_n})$ is an Ideal in $H^1_{\mathrm{loc}}(\R^d)$, $(u_n-v)^+\in H^1_0(B_{\rho_n})$. We may thus insert $w=(u_n-v)^+$ into \eqref{eq.variational}. Taking the uniform ellipticity of $Q$ into account, this yields
\[
\lambda \int_{B_{\rho_n}} \big( (u_n -v)^+\big)^2\, dx + \eta \int_{B_{\rho_n}} |\nabla (u_n - v)^+|^2\, dx + \int_{B_{\rho_n}} V\big((u_n-v)^+\big)^2\, dx \leq 0.
\]
As $V\geq 0$, it follows that $(u_n - v)^+=0$ and thus $u_n \leq v$. Upon $n\to \infty$ we obtain $u\leq v$ and thus
$R(\lambda, A_{\min})f \leq R(\lambda, B)f$ for $0\leq f\in L^2(\R^d)$.\smallskip

\emph{Step 5}. We establish properties (e) and (f).

As we have already mentioned above, the semigroup $T_2(\cdot)$ is sub-Markovian and consists of symmetric operators. The latter implies that
the generator $A_{\min}$ of $T_2(\cdot)$ is selfadjoint. In view of property (d), the ultracontractivity of the semigroup follows immediately from
Proposition \ref{ultra-arendt}.

As for consistency we note that the semigroup $T(\cdot)$ on $C_b(\R^d)$ is obtained by the same approximation procedure as for $T_2(\cdot)$, see
\cite[Theorem 2.2.1]{Lorenzi}. But the semigroup solutions of the Cauchy--Dirichlet problem associated with $\calA$ on $C_b(\overline{B_{\rho}})$ is consistent with the semigroup solution on $L^2(B_{\rho})$ considered above. Thus, consistency of $T_2(\cdot)$ and $T(\cdot)$ follows.
\end{proof}

\begin{rem}
\begin{enumerate}
\item 
As the minimal realization $A_{\min}$ of the elliptic operator $\calA$ generates a symmetric sub-Markovian $C_0$-semigroup $T_2(\cdot)$ on $L^2(\R^d)$, it follows from  \cite[Theorem 1.4.1]{Davies}, that $T_2(\cdot)$ extends to a positive $C_0$-semigroup of contractions $T_p(\cdot)$ on $L^p(\R^d)$ for all $p\in [1,\infty)$. Moreover these semigroups are consistent, i.e.
\[
T_p(t)f=T_q(t)f,\quad \hbox{\ for all }f\in L^p(\R^d)\cap L^q(\R^d),\,t\ge 0.
\]
\item
Since, by Theorem \ref{Thm: minimal realization}, $T_2(\cdot)$ is ultracontractive, and $T_2(\cdot)$ coincindes with $T(\cdot)$ on $L^2(\R^d)\cap C_b(\R^d)$, it follows that $T_2(\cdot)$ is given through an integral kernel which coincides with the kernel $p$ of the semigroup $T(\cdot)$.
\end{enumerate}
\end{rem}

\section{Time dependent Lyapunov functions for parabolic operators with polynomially and exponentially diffusion coefficients}\label{S3}

As in \cite{Spina08}, \cite{ALR10} and \cite{KunzeLorenziRhandi2} we use time dependent Lyapunov functions to prove pointwise bounds of the kernel $p$. In this section we give conditions under which certain exponentials are time dependent Lyapunov functions
for $L:=\partial_t+A$ also in the case of polynomially and exponentially growing diffusion coefficients.

We now introduce, as in \cite{Spina08} and \cite{KunzeLorenziRhandi2}, time dependent Lyapunov functions for $L$.
\begin{defi}
We say that a function $W:[0,1]\times \R^d\to [0,\infty)$ is a \emph{time dependent Lyapunov function for} $L$ if $W\in C^{1,2}((0,1)\times \R^d)\cap C([0,1]\times \R^d)$ such that $\lim_{|x|\to \infty}W(t,x)=\infty$ uniformly for $t$ in compact subsets of $(0,1],\,W\le Z$ and there is $0\le h\in L^1(0,1)$ such that
\begin{equation}\label{eq1: definition time dependent Lyapunov functions}
LW(t,x)\le h(t)W(t,x)
\end{equation}
and 
\begin{equation}\label{eq2: definition time dependent Lyapunov functions}
\partial_t W(t,x)+\eta\Delta W(t,x)-V(x)W(t,x)\leq h(t)W(t,x)
\end{equation}
for all $(t,x)\in (0,1)\times \R^d$. To emphasize the dependence on $Z$ and $h$, we also say that $W$ is a time dependent Lyapunov function for $L$ \emph{with respect to $Z$ and $h$}.
\end{defi}

The following result shows that time dependent Lyapunov functions are integrable with respect to the measure $p(t,x,y)dy$ for any $(t,x)\in (0,1)\times \R^d$.
\begin{prop}\label{Prop: Time dependent Lyapunov functions are integrable with respect to pdy}
If $W$ is a time dependent Lyapunov function for $L$ with respect to $h$, then for $\xi_W (t,x):=\int_{\R^d}p(t,x,y)W(t,y)\,dy$, we have
\[
\xi_W (t,x)\le e^{\int_0^th(s)\,ds}W(0,x),\quad \forall (t,x)\in [0,1]\times \R^d.\]
\end{prop}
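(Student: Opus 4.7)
The plan is to derive the bound by approximation: I would work with the Dirichlet semigroups $T^{(\rho)}(\cdot)$ on $L^2(B_\rho)$ from the proof of Theorem~\ref{Thm: minimal realization}, whose integral kernels $p_\rho(t,x,y)$ satisfy $p_\rho\uparrow p$ pointwise as $\rho\to\infty$ (inherited from $T^{(\rho)}(t)\uparrow T_2(t)$ and the fact that both semigroups are given through kernels). Setting
\[
\phi_\rho(t,x) := \int_{B_\rho} p_\rho(t,x,y)\, W(t,y)\, dy, \qquad (t,x)\in [0,1]\times B_\rho,
\]
monotone convergence (using $W\ge 0$) gives $\phi_\rho(t,x)\uparrow \xi_W(t,x)$, so it is enough to prove $\phi_\rho(t,x)\le e^{\int_0^t h(s)\,ds}\,W(0,x)$ uniformly in $\rho$.

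The key step is a Gronwall-type inequality for $\phi_\rho$. Using parabolic regularity of $p_\rho$, which is a classical solution of $\partial_t p_\rho = A_y p_\rho$ on $(0,1)\times B_\rho\times B_\rho$ by symmetry of the operator, together with $W\in C^{1,2}((0,1)\times \R^d)$, I would differentiate under the integral sign to get
\[
\partial_t \phi_\rho(t,x) = \int_{B_\rho} A_y p_\rho(t,x,y)\, W(t,y)\, dy + \int_{B_\rho} p_\rho(t,x,y)\, \partial_t W(t,y)\, dy.
\]
Two integrations by parts in the first integral, exploiting the Dirichlet condition $p_\rho|_{\partial B_\rho}=0$, produce
\[
\int_{B_\rho} A_y p_\rho \cdot W\, dy = \int_{B_\rho} p_\rho\, AW\, dy + \int_{\partial B_\rho} W\,(Q\nabla_y p_\rho)\cdot\nu\, dS.
\]
The boundary integral is non-positive: since $p_\rho(t,x,\cdot)$ vanishes on $\partial B_\rho$ but is positive inside, $\nabla_y p_\rho$ is a non-positive multiple of $\nu$, and by uniform ellipticity of $Q$ together with $W\ge 0$ the integrand is $\le 0$. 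Combining with \eqref{eq1: definition time dependent Lyapunov functions} gives
\[
\partial_t \phi_\rho(t,x) \le \int_{B_\rho} p_\rho\, LW\, dy \le h(t)\,\phi_\rho(t,x).
\]

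To finish, Gronwall's inequality on $[0,t]$ yields $\phi_\rho(t,x) \le e^{\int_0^t h(s)\,ds}\phi_\rho(0,x) = e^{\int_0^t h(s)\,ds}W(0,x)$, where the identification $\phi_\rho(0,x)=W(0,x)$ uses the strong continuity of $T^{(\rho)}(\cdot)$ together with the continuity of $W$ at $t=0$. Passing to the limit $\rho\to\infty$ concludes. The main technical hurdle is justifying the differentiation under the integral and the boundary integration by parts; this rests on parabolic boundary regularity of $p_\rho$, which is available because $\partial B_\rho$ is smooth and the coefficients of $A$ are $C^{1+\zeta}$ on $\overline{B_\rho}$. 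Observe that condition \eqref{eq2: definition time dependent Lyapunov functions} plays no role in this proof; it enters only later, in the actual kernel estimates in Section~\ref{S4}.
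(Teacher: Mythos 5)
Your argument is correct and is essentially the proof the paper has in mind: the paper simply defers to \cite[Proposition 2.3]{Spina08}, whose proof is exactly this scheme of working with the Dirichlet kernels $p_\rho$ on balls, integrating by parts, discarding the non-positive boundary term coming from $\partial_\nu p_\rho\le 0$ on $\partial B_\rho$, applying Gronwall with the $L^1$ weight $h$, and passing to the limit by monotone convergence. Your closing remarks (symmetry of $A$ justifying the equation in $y$, and the fact that \eqref{eq2: definition time dependent Lyapunov functions} is not needed here but only in the later approximation with truncated diffusion coefficients) are also accurate.
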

\begin{proof}
The proof is similar to the one given in \cite[Proposition 2.3]{Spina08}.
\end{proof}

The following results give conditions under which certain exponentials are
time dependent Lyapunov functions. Here $x\mapsto |x|_*^\beta$ denotes any $C^2$-function which coincides with $x\mapsto |x|^\beta$ for $|x|\ge 1$.

\begin{prop}\label{Prop: Time dependent Lyapunov functions in case of polynomially growing diffusions}
Assume that there is a constant $c_q>0$ such that 
\begin{equation}\label{Hp1 Prop: Time dependent Lyapunov functions in case of polynomially growing diffusions}
\sum_{i,j=1}^dq_{ij}(x)\xi_i\xi_j\le c_q(1+|x|^m)|\xi|^2
\end{equation}
holds for all $\xi ,\,x\in \R^d$ and some $m> 0$. Consider the function $W(t,x)=e^{\varepsilon t^\alpha |x|_*^\beta}$ with $\beta >(2-m)\vee 0,\,\varepsilon >0$ and $\alpha >\frac{\beta}{\beta +m-2}$. If 
\begin{equation}\label{Hp2 Prop: Time dependent Lyapunov functions in case of polynomially growing diffusions}
\limsup_{|x|\to \infty}|x|^{1-\beta -m}\left(G\cdot \frac{x}{|x|}-\frac{V}{\varepsilon \beta |x|^{\beta -1}}\right)<-\Lambda
\end{equation}
is satisfied for $\Lambda >c_q\varepsilon \beta$ 
and
\begin{equation}\label{Hp3 Prop: Time dependent Lyapunov functions in case of polynomially growing diffusions}
\lim_{\abs{x}\to\infty} V(x) \abs{x}^{2-2\beta-m}>c
\end{equation}
holds true for some $c>0$, then $W$ is a time dependent Lyapunov function for $L$ with respect to $Z(x)=e^{\varepsilon |x|_*^\beta}$ and $h(t)=C_1t^{\alpha -\gamma(2\beta+m-2)}$ for some $\gamma >\frac{1}{\beta +m-2}$ and some constant $C_1>0$. Here $G_j:=\sum_{i=1}^dD_iq_{ij}$. Moreover,
$$\xi_W (t,x)\le e^{\int_0^1 h(s)\,ds}=:C_2$$ for all $(t,x)\in [0,1]\times \R^d$.
\end{prop}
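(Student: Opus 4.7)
The regularity $W\in C^{1,2}((0,1)\times\R^d)\cap C([0,1]\times\R^d)$, the pointwise bound $W\le Z$ (since $t^\alpha\le 1$ on $[0,1]$), and the divergence $W(t,x)\to\infty$ uniformly on compact subsets of $(0,1]$ are all immediate from the explicit form of $W$. The concluding bound $\xi_W\le C_2$ then follows from Proposition \ref{Prop: Time dependent Lyapunov functions are integrable with respect to pdy} once the two differential inequalities hold, because $W(0,\cdot)\equiv 1$. The heart of the proof is therefore to establish \eqref{eq1: definition time dependent Lyapunov functions} and \eqref{eq2: definition time dependent Lyapunov functions}.

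For $|x|\ge 1$ I would compute explicitly
\begin{align*}
\frac{\partial_t W}{W}&=\varepsilon\alpha t^{\alpha-1}|x|^\beta,\qquad \frac{D_iW}{W}=\varepsilon t^\alpha\beta|x|^{\beta-2}x_i,\\
\frac{D_{ij}W}{W}&=\varepsilon t^\alpha\beta\bigl[|x|^{\beta-2}\delta_{ij}+(\beta-2)|x|^{\beta-4}x_ix_j\bigr]+\varepsilon^2 t^{2\alpha}\beta^2|x|^{2\beta-4}x_ix_j.
\end{align*}
Writing $AW=\sum_{ij}q_{ij}D_{ij}W+\sum_jG_jD_jW-VW$ and bounding each quadratic form in $Q$ by \eqref{Hp1 Prop: Time dependent Lyapunov functions in case of polynomially growing diffusions}, the dominant $|x|^{2\beta+m-2}$ contribution from the squared gradient is $c_q(\varepsilon t^\alpha\beta)^2|x|^{2\beta+m-2}$. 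Hypothesis \eqref{Hp2 Prop: Time dependent Lyapunov functions in case of polynomially growing diffusions} then controls the drift: for $|x|$ sufficiently large, $\varepsilon t^\alpha\beta|x|^{\beta-1}(G\cdot x/|x|)\le t^\alpha V-\varepsilon\beta\Lambda t^\alpha|x|^{2\beta+m-2}$. Combining with $-V$ yields $(t^\alpha-1)V\le 0$, and the remaining $|x|^{2\beta+m-2}$ coefficient becomes $\varepsilon\beta t^\alpha(c_q\varepsilon\beta t^\alpha-\Lambda)\le-\delta t^\alpha$ for some $\delta>0$, thanks to $\Lambda>c_q\varepsilon\beta$ and $t^\alpha\le 1$.

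Thus, up to terms of lower order in $|x|$,
\[
\frac{LW}{W}\le \varepsilon\alpha t^{\alpha-1}|x|^\beta-\delta t^\alpha|x|^{2\beta+m-2}.
\]
A one-variable optimization in $r=|x|$ shows this bound is maximized at $r_\ast\asymp t^{-1/(\beta+m-2)}$, with maximum value of order $t^{\alpha-1-\beta/(\beta+m-2)}$. The assumption $\alpha>\beta/(\beta+m-2)$ is precisely what allows one to pick $\gamma$ in the open interval $\bigl(\tfrac{1}{\beta+m-2},\tfrac{\alpha+1}{2\beta+m-2}\bigr)$; for any such $\gamma$ one has $\alpha-\gamma(2\beta+m-2)<\alpha-1-\beta/(\beta+m-2)$, so for $t\in(0,1)$ the computed maximum is dominated by $C_1 t^{\alpha-\gamma(2\beta+m-2)}=h(t)$ for $C_1$ large enough, and $\gamma<(\alpha+1)/(2\beta+m-2)$ gives $h\in L^1(0,1)$. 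On the complementary bounded region $|x|\le R_0$, both $AW/W$ and $\partial_tW/W$ are controlled by $Ct^{\alpha-1}$ uniformly, which is again dominated by $h(t)$ after enlarging $C_1$.

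Inequality \eqref{eq2: definition time dependent Lyapunov functions} is handled by the same scheme: the Laplacian contributes $\eta\Delta W/W=\eta\varepsilon^2 t^{2\alpha}\beta^2|x|^{2\beta-2}+O(|x|^{\beta-2})$, while hypothesis \eqref{Hp3 Prop: Time dependent Lyapunov functions in case of polynomially growing diffusions} supplies $V(x)\ge c'|x|^{2\beta+m-2}$ for large $|x|$, producing the same dominant negative $-C|x|^{2\beta+m-2}W$ term after absorbing the positive Laplacian piece, and the identical optimization closes the argument. The main obstacle is not any single hard estimate but the careful exponent bookkeeping in this final step: understanding why $\gamma>1/(\beta+m-2)$ is exactly the correct threshold, and verifying that all lower-order contributions (the $|x|^{\beta+m-2}$ and $|x|^{\beta-2}$ remainders together with the bounded-$|x|$ regime) are absorbed into $h$ with an integrable exponent.
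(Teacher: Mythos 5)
Your proposal is correct, and the core of it --- the explicit derivative computations, the use of \eqref{Hp1 Prop: Time dependent Lyapunov functions in case of polynomially growing diffusions} on each quadratic form, the multiplication of \eqref{Hp2 Prop: Time dependent Lyapunov functions in case of polynomially growing diffusions} by $\varepsilon\beta t^\alpha|x|^{2\beta+m-2}$ to cancel the drift against $t^\alpha V$, and the observation that $\Lambda>c_q\varepsilon\beta$ makes the coefficient of $|x|^{2\beta+m-2}$ strictly negative --- coincides with the paper's argument. Where you genuinely diverge is in the last step: the paper converts the spatial inequality into the time-dependent bound $h(t)$ by splitting into the two regions $|x|>t^{-\gamma}$ and $|x|\le t^{-\gamma}$ (showing $LW\le 0$ in the first for large $|x|$, and substituting $|x|\le t^{-\gamma}$ into every positive term in the second, whence the leading exponent $\alpha-\gamma(2\beta+m-2)$), whereas you optimize the reduced expression $\varepsilon\alpha t^{\alpha-1}r^{\beta}-\delta t^{\alpha}r^{2\beta+m-2}$ globally in $r$, obtaining a maximum of order $t^{\alpha-1-\beta/(\beta+m-2)}$. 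The two are consistent: your exponent is exactly the limit of $\alpha-\gamma(2\beta+m-2)$ as $\gamma\downarrow\frac{1}{\beta+m-2}$, which is why $\gamma>\frac{1}{\beta+m-2}$ is the correct threshold and why your optimum is dominated by the stated $h$; your route in fact yields a marginally sharper admissible $h$ and makes the role of the hypothesis $\alpha>\frac{\beta}{\beta+m-2}$ (integrability of the optimized maximum) more transparent, at the cost of having to absorb the lower-order terms $|x|^{\beta+m-2}$, $|x|^{2\beta-2}$ into the negative term before optimizing, which you correctly flag as routine. One small caution for the write-up of \eqref{eq2: definition time dependent Lyapunov functions}: there the negative term coming from \eqref{Hp3 Prop: Time dependent Lyapunov functions in case of polynomially growing diffusions} is $-c'|x|^{2\beta+m-2}$ \emph{without} a factor $t^\alpha$, so the optimization is not literally identical; the balance point and the resulting exponent change, but the outcome is still dominated by $h$ because $\gamma>\frac{1}{\beta+m-2}$, so no gap results.
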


\begin{proof}
It's easy to see that $W\in C^{1,2}((0,1)\times \R^d)\cap C([0,1]\times \R^d)$, $\lim_{|x|\to \infty}W(t,x)=\infty$ uniformly for $t$ in compact subsets of $(0,1]$ and $W\le Z$. It remains to show that there is $0\le h\in L^1(0,1)$ such that \eqref{eq1: definition time dependent Lyapunov functions} and \eqref{eq2: definition time dependent Lyapunov functions} hold true.

In the following computations we assume that $\abs{x}\ge 1$ so that $\abs{x}_*^s=\abs{x}^s$ for $s\ge 0$. Otherwise, if $\abs{x}\le 1$, by continuity the functions $(t,x)\to |W(t,x)^{-1}LW(t,x)|$ and $(t,x)\to |W(t,x)^{-1}[\partial_t W(t,x)+\eta\Delta W(t,x)-V(x)W(t,x)]|$ are bounded on $(0,1)\times B_1$. Thus, we possibly choose a larger constant $C_1$ to define the function $h(t)$.

Let $t\in (0,1)$ and $\abs{x}\ge 1$. By straightforward computations we have
\begin{align*}
D_jW(t,x)=&\varepsilon\beta t^\alpha \abs{x}^{\beta-2} x_j W(t,x),\\
D_i(q_{ij}D_j W)(t,x)=& \varepsilon\beta t^\alpha \abs{x}^{\beta-2} D_iq_{ij}(x)x_j W(t,x) + \varepsilon\beta (\beta-2) t^\alpha \abs{x}^{\beta-4} q_{ij}(x) x_i x_j W(t,x) \\ 
&+ \varepsilon\beta t^\alpha \abs{x}^{\beta-2} q_{ij}(x) \delta_{ij} W(t,x)   + \varepsilon^2\beta^2 t^{2\alpha} \abs{x}^{2\beta-4} q_{ij}(x) x_i x_j W(t,x).
\end{align*}
Then, we obtain
\begin{align}\label{eq4 Prop: Time dependent Lyapunov functions in case of polynomially growing diffusions}
LW(t,x)=& \partial_t W(t,x) +AW(t,x)\notag\\
=& \varepsilon\alpha t^{\alpha-1} \abs{x}^{\beta} W(t,x)
+ \varepsilon\beta t^\alpha \abs{x}^{\beta-2} W(t,x) \sum_{i,j=1}^d D_iq_{ij}(x)x_j \notag\\
&+ \varepsilon\beta (\beta-2) t^\alpha \abs{x}^{\beta-4} W(t,x) \sum_{i,j=1}^d q_{ij}(x) x_i x_j 
+ \varepsilon\beta t^\alpha \abs{x}^{\beta-2}W(t,x) \sum_{i,j=1}^d q_{ij}(x) \delta_{ij} \notag\\
&+ \varepsilon^2\beta^2 t^{2\alpha} \abs{x}^{2\beta-4}W(t,x) \sum_{i,j=1}^d q_{ij}(x) x_i x_j -V(x)W(t,x).
\end{align}
We recall that $G_j:=\sum_{i=1}^dD_iq_{ij}$ and we use the polynomially growth of the diffusion coefficients  \eqref{Hp1 Prop: Time dependent Lyapunov functions in case of polynomially growing diffusions}. 
 We have
\begin{align*}
LW(t,x)
\le & \varepsilon\alpha t^{\alpha-1} \abs{x}^{\beta} W(t,x) + \varepsilon\beta t^\alpha \abs{x}^{\beta-2} W(t,x) G(x)\cdot x \\
&+ c_q \varepsilon\beta (\beta-2)^+ t^\alpha \abs{x}^{\beta-4} (1+\abs{x}^m)\abs{x}^2 W(t,x)  \\
&+ d c_q \varepsilon\beta t^\alpha \abs{x}^{\beta-2} (1+\abs{x}^m) W(t,x)\\
&+ c_q \varepsilon^2\beta^2 t^{2\alpha} \abs{x}^{2\beta-4} (1+\abs{x}^m)\abs{x}^2W(t,x)  -V(x)W(t,x).
\end{align*} 
 
Since $(1+\abs{x}^m)\leq 2 \abs{x}^m$ and $t^\alpha\leq 1$, we arrange the terms as follows.
\begin{align}\label{eq1 Prop: Time dependent Lyapunov functions in case of polynomially growing diffusions}
LW(t,x)
\le  \varepsilon\beta t^\alpha\abs{x}^{2\beta+m-2} W(t,x)\Bigg[& \frac{\alpha}{\beta t} \abs{x}^{2-\beta-m}+2c_q ((\beta-2)^++d) \abs{x}^{-\beta} + c_q \varepsilon\beta t^\alpha\notag\\ 
&+c_q \varepsilon\beta t^\alpha \abs{x}^{-m}
+ \abs{x}^{1-\beta-m} \left( G\cdot \frac{x}{\abs{x}} - \frac{V}{\varepsilon\beta\abs{x}^{\beta-1}}\right)\Bigg].
\end{align} 
Let $\gamma>\frac{1}{\beta+m-2}$. We distinguish two cases.

\textit{Case 1:} $\displaystyle\abs{x}>\frac{1}{t^\gamma}$.

Since $t^\alpha\le 1$ and using \eqref{eq1 Prop: Time dependent Lyapunov functions in case of polynomially growing diffusions}, we get
\begin{align}\label{eq5 Prop: Time dependent Lyapunov functions in case of polynomially growing diffusions}
LW(t,x)
\le \varepsilon\beta t^\alpha\abs{x}^{2\beta+m-2} W(t,x) \Bigg[& \frac{\alpha}{\beta} \abs{x}^{\frac{1}{\gamma}+2-\beta-m}+2c_q ((\beta-2)^++d) \abs{x}^{-\beta} + c_q \varepsilon\beta \notag\\ 
&+c_q \varepsilon\beta \abs{x}^{-m}
+ \abs{x}^{1-\beta-m} \left( G\cdot \frac{x}{\abs{x}} - \frac{V}{\varepsilon\beta\abs{x}^{\beta-1}}\right)\Bigg]. 
\end{align}
We claim that, if we assume further that $\abs{x}$ is large enough, then
\begin{equation*}
LW(t,x)\leq 0,
\end{equation*}
for all $t\in (0,1)$. To see this, let $\abs{x}>K$ for some $K>1$.
Combining \eqref{Hp2 Prop: Time dependent Lyapunov functions in case of polynomially growing diffusions} with \eqref{eq5 Prop: Time dependent Lyapunov functions in case of polynomially growing diffusions}  yields
\begin{align}\label{eq2 Prop: Time dependent Lyapunov functions in case of polynomially growing diffusions}
LW(t,x)
\le \varepsilon\beta t^\alpha\abs{x}^{2\beta+m-2} W(t,x) \bigg[& \frac{\alpha}{\beta} \abs{x}^{\frac{1}{\gamma}+2-\beta-m}+2c_q ((\beta-2)^++d) \abs{x}^{-\beta} + c_q \varepsilon\beta \notag\\ 
&+c_q \varepsilon\beta \abs{x}^{-m}
-\Lambda \bigg]. 
\end{align}
Considering that $\gamma>\frac{1}{\beta+m-2}$, $\beta>0$ and $m> 0$, we infer that
\begin{align*}
&\frac{\alpha}{\beta} \abs{x}^{\frac{1}{\gamma}+2-\beta-m}+2c_q ((\beta-2)^++d) \abs{x}^{-\beta} + c_q \varepsilon\beta \notag 
+c_q \varepsilon\beta \abs{x}^{-m}
-\Lambda\\
&\le \left(\frac{\alpha}{\beta} +2c_q ((\beta-2)^++d) + c_q \varepsilon\beta\right) K^{-l}+c_q \varepsilon\beta
-\Lambda,
\end{align*}
where $l:=\min(\frac{-1}{\gamma}-2+\beta +m, \beta, m)>0$. Since $\Lambda>c_q\varepsilon\beta$, choosing 
\begin{equation*}
K\ge \left(\frac{\frac{\alpha}{\beta} +2c_q ((\beta-2)^++d) + c_q \varepsilon\beta}{\Lambda-c_q \varepsilon\beta}\right)^\frac{1}{l},
\end{equation*}
it follows that the quantity within square brackets in the right hand side of \eqref{eq2 Prop: Time dependent Lyapunov functions in case of polynomially growing diffusions} is negative. Thus $LW(t,x)\leq 0$ for $\abs{x}>\frac{1}{t^\gamma}$, $\abs{x}>K$ and for all $t\in(0,1)$.

For the remaining values of $x$, $\abs{x}\leq K$, we have that $LW(t,x)\leq C$ for a certain constant $C>0$.
Anyway, we conclude that
\begin{equation*}
LW(t,x)\leq CW(t,x),
\end{equation*}
for all $ t\in (0,1)$ and $\abs{x}>\frac{1}{t^\gamma}$.

\textit{Case 2:} $\displaystyle\abs{x}\leq \frac{1}{t^\gamma}$.

We assume that $\abs{x}$ is large enough. Otherwise, as in Case 1, $LW(t,x)\leq C\leq CW(t,x)$ for a certain constant $C$.
We combine \eqref{Hp2 Prop: Time dependent Lyapunov functions in case of polynomially growing diffusions} and \eqref{eq1 Prop: Time dependent Lyapunov functions in case of polynomially growing diffusions} to deduce that
\begin{align*}
LW(t,x)\leq &\big[\varepsilon\alpha t^{\alpha-1-\gamma\beta}+2c_q \varepsilon\beta((\beta-2)^++d)t^{\alpha-\gamma(\beta+m-2)}+c_q \varepsilon^2 \beta^2 t^{2\alpha-\gamma(2\beta+m-2)}\\
&+c_q \varepsilon^2 \beta^2 t^{2\alpha-\gamma(2\beta-2)}-\varepsilon\beta t^\alpha \abs{x}^{2\beta+m-2}\Lambda\big]W(t,x).
\end{align*}
We drop the term involving $\Lambda$ because it's negative. Moreover, since $\gamma>1/(\beta+m-2)$, we note that the leading term is $t^{\alpha-\gamma(2\beta+m-2)}$. Hence
\begin{equation*}
LW(t,x)\leq h(t)W(t,x),
\end{equation*}  
where
\begin{equation*}
h(t):=C_1 t^{\alpha-\gamma(2\beta+m-2)}.
\end{equation*}
For the function $h(t)$ to be in the space $L^1((0,1))$, we set $\alpha>\frac{\beta}{\beta+m-2}$. In this way, choosing $\gamma<\frac{\alpha+1}{2\beta+m-2}$ so that $\alpha-\gamma(2\beta+m-2)>-1$, $h(t)$ is integrable in the interval $(0,1)$.

Summing up, considering a possibly larger constant $C_1$, we proved \eqref{eq1: definition time dependent Lyapunov functions} for all $t\in (0,1)$ and $x\in\R^d$.

We now verify \eqref{eq2: definition time dependent Lyapunov functions}. An easy computation shows that
\begin{equation*}
\Delta W(t,x)
= \varepsilon\beta (\beta+d-2) t^\alpha \abs{x}^{\beta-2} W(t,x) +\varepsilon^2\beta^2 t^{2\alpha} \abs{x}^{2\beta-2} W(t,x). 
\end{equation*}
Thus, we get
\begin{align}\label{eq3 Prop: Time dependent Lyapunov functions in case of polynomially growing diffusions}
\partial_t W(t,x)+\eta\Delta W(t,x)-V(x)W(t,x)
= \, & \varepsilon\alpha t^{\alpha-1} \abs{x}^{\beta} W(t,x) \notag \\
&+ \eta \varepsilon\beta (\beta+d-2) t^\alpha \abs{x}^{\beta-2} W(t,x)\notag \\
&+ \eta \varepsilon^2\beta^2 t^{2\alpha} \abs{x}^{2\beta-2} W(t,x) -V(x)W(t,x).
\end{align}
As in the first part of the proof, we let $\gamma>\frac{1}{\beta+m-2}$ and we distinguish two cases.

\textit{Case 1:} $\displaystyle\abs{x}>\frac{1}{t^\gamma}$.

Since $t^\alpha\leq 1$, by \eqref{eq3 Prop: Time dependent Lyapunov functions in case of polynomially growing diffusions} we obtain
\begin{align*}
 \partial_t  W(t,x)+\eta\Delta W(t,x)-V&(x)W(t,x)\notag\\
 \leq \varepsilon\beta t^\alpha\abs{x}^{2\beta+m-2} W(t,x) \Bigg[ & \frac{\alpha}{\beta} \abs{x}^{\frac{1}{\gamma}+2-\beta-m} 
+\eta(\beta+d-2) \abs{x}^{-\beta-m}  \notag \\
&+ \eta \varepsilon\beta \abs{x}^{-m}
- \frac{1}{\varepsilon\beta}V(x)\abs{x}^{2-2\beta-m}\Bigg].  
\end{align*}
If $\abs{x}$ large enough, by \eqref{Hp3 Prop: Time dependent Lyapunov functions in case of polynomially growing diffusions} we have
\begin{align*}
& \partial_t  W(t,x)+\eta\Delta W(t,x)-V(x)W(t,x)\notag\\
& \leq \varepsilon\beta t^\alpha\abs{x}^{2\beta+m-2} W(t,x) \Bigg[  \frac{\alpha}{\beta} \abs{x}^{\frac{1}{\gamma}+2-\beta-m} 
+\eta(\beta+d-2) \abs{x}^{-\beta-m} + \eta \varepsilon\beta \abs{x}^{-m}
- \frac{c}{\varepsilon \beta} \Bigg].
\end{align*}
Arguing as in \eqref{eq2 Prop: Time dependent Lyapunov functions in case of polynomially growing diffusions}, we find that $\partial_t  W(t,x)+\eta\Delta W(t,x)-V(x)W(t,x)$ is negative for $\abs{x}$ large, whereas it's bounded for the remaining values of $x$. Therefore, we deduce that
\begin{equation*}
\partial_t  W(t,x)+\eta\Delta W(t,x)-V(x)W(t,x)\leq C W(t,x),
\end{equation*}
for all $ t\in (0,1)$ and $\abs{x}>\frac{1}{t^\gamma}$.

\textit{Case 2:} $\displaystyle\abs{x}\leq \frac{1}{t^\gamma}$.

Since $V\geq 0$, \eqref{eq3 Prop: Time dependent Lyapunov functions in case of polynomially growing diffusions} leads to
\begin{align*}
\partial_t  W(t,x)+\eta\Delta W(t,x)-V(x)W(t,x)
\leq \big[&\varepsilon\alpha t^{\alpha-1-\gamma\beta}
+\eta \varepsilon\beta((\beta -2)^++d)t^{\alpha-\gamma(\beta-2)}\\
&+\eta\varepsilon^2 \beta^2 t^{2\alpha-\gamma(2\beta-2)}\big]W(t,x).
\end{align*}
We can control the right hand side of the previous inequality with the function $h(t)W(t,x)$, obtaining that
\begin{equation*}
\partial_t  W(t,x)+\eta\Delta W(t,x)-V(x)W(t,x)\leq h(t)W(t,x),
\end{equation*}
where the constant $C_1$ in the function $h$ has to be suitably adjusted.
In both cases \eqref{eq2: definition time dependent Lyapunov functions} holds true.
We conclude that $W$ is a time dependent Lyapunov function for $L$.

Moreover, by Proposition \ref{Prop: Time dependent Lyapunov functions are integrable with respect to pdy}, we have 
$$\xi_W (t,x)\le e^{\int_0^t h(s)\,ds} W(0,x) \le e^{\int_0^1 h(s)\,ds}=:C_2$$ for all $(t,x)\in [0,1]\times \R^d$.
\end{proof}

\begin{rem}
One can easily see that the same conclusion as in Proposition \ref{Prop: Time dependent Lyapunov functions in case of polynomially growing diffusions} remains valid if we replace the operator $A$ with the more general operator $A_F:=A+F\cdot \nabla$ with $F\in C^\zeta(\R^d, \R^d)$ for some $\zeta\in (0,1)$, and the condition 
\eqref{Hp2 Prop: Time dependent Lyapunov functions in case of polynomially growing diffusions} with
$$\limsup_{|x|\to \infty}|x|^{1-\beta -m}\left((G+F)\cdot \frac{x}{|x|}-\frac{V}{\varepsilon \beta |x|^{\beta -1}}\right)<-\Lambda .$$ This generalizes Proposition 2.3 in \cite{ALR10}.
\end{rem}

Let us now consider the case of exponentially growing coefficients.

\begin{prop}\label{Prop: Time dependent Lyapunov functions in case of exponentially growing diffusions}
Assume that there is a constant $c_e>0$ such that 
\begin{equation}\label{Hp1 Prop: Time dependent Lyapunov functions in case of exponentially growing diffusions}
\sum_{i,j=1}^dq_{ij}(x)\xi_i\xi_j\le c_e e^{|x|^m}|\xi|^2
\end{equation}
holds for all $\xi ,\,x\in \R^d$ and some $m\ge 2$. Consider the function 
\begin{equation*}
W(t,x)=\exp\bigg(\varepsilon t^\alpha \int_0^{\abs{x}_*} e^{\frac{\tau^\beta}{2}}d\tau\bigg)
\end{equation*}
 with $\frac{m}{2}+1\le \beta \leq m,\,\varepsilon >0$ and $\alpha >\frac{2\beta+m-2}{2m}$. If 
\begin{equation}\label{Hp2 Prop: Time dependent Lyapunov functions in case of exponentially growing diffusions}
\limsup_{|x|\to \infty}|x|^{1-\beta-m}e^{-\frac{\abs{x}^\beta}{2}-\abs{x}^m}\left(G\cdot \frac{x}{|x|}-\frac{V}{\varepsilon e^{\frac{\abs{x}^\beta}{2}}}\right)<-\Lambda
\end{equation}
is satisfied for $\Lambda >0$ and
\begin{equation}\label{Hp3 Prop: Time dependent Lyapunov functions in case of exponentially growing diffusions}
\lim_{\abs{x}\to\infty} V(x) \abs{x}^{1-\beta-m} e^{-\abs{x}^\beta-\abs{x}^m}>c
\end{equation}
holds true for some $c>0$, then $W$ is a time dependent Lyapunov function for $L$ with respect to $Z(x)=\exp\big(\varepsilon \int_0^{\abs{x}_*} e^{\frac{\tau^\beta}{2}}d\tau\big)$ and $h(t)=C_3 t^{\alpha-\gamma \left( \beta+\frac{3}{2}m-1\right)}$ for some $\gamma >\frac{1}{m}$ and some constant $C_3>0$. Here $G_j:=\sum_{i=1}^dD_iq_{ij}$. Moreover,
$$\xi_W (t,x)\le e^{\int_0^1 h(s)\,ds}=:C_4$$ for all $(t,x)\in [0,1]\times \R^d$.
\end{prop}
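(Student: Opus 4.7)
My plan is to adapt, step by step, the proof of Proposition~\ref{Prop: Time dependent Lyapunov functions in case of polynomially growing diffusions}, replacing the polynomial bound on the diffusion by the exponential bound~\eqref{Hp1 Prop: Time dependent Lyapunov functions in case of exponentially growing diffusions} and using \eqref{Hp2 Prop: Time dependent Lyapunov functions in case of exponentially growing diffusions}--\eqref{Hp3 Prop: Time dependent Lyapunov functions in case of exponentially growing diffusions} in place of \eqref{Hp2 Prop: Time dependent Lyapunov functions in case of polynomially growing diffusions}--\eqref{Hp3 Prop: Time dependent Lyapunov functions in case of polynomially growing diffusions}. The regularity $W\in C^{1,2}((0,1)\times\R^d)\cap C([0,1]\times\R^d)$, the uniform divergence $W(t,\cdot)\to\infty$ as $|x|\to\infty$ for $t$ in compact subsets of $(0,1]$, and the bound $W\le Z$ (which follows from $t^\alpha\le 1$) are immediate from the definition. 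The final bound on $\xi_W$ is then a direct application of Proposition~\ref{Prop: Time dependent Lyapunov functions are integrable with respect to pdy}, so the entire argument reduces to verifying \eqref{eq1: definition time dependent Lyapunov functions} and \eqref{eq2: definition time dependent Lyapunov functions}.

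Setting $\Phi(r):=\int_0^r e^{\tau^\beta/2}\,d\tau$ so that $W=\exp(\varepsilon t^\alpha\Phi(|x|))$ and $\Phi'(r)=e^{r^\beta/2}$, I would compute for $|x|\ge 1$ the explicit formulas for $D_jW$, $D_{ij}W$, $\Delta W$ and $\div(Q\nabla W)$; the contribution from $|x|\le 1$ is bounded by continuity and absorbed in the constant $C_3$. Substituting into $LW=\partial_tW+\div(Q\nabla W)-VW$ and using \eqref{Hp1 Prop: Time dependent Lyapunov functions in case of exponentially growing diffusions} to estimate the quadratic form of $Q$, the leading part of $LW/W$ takes the form
\[
\varepsilon t^{\alpha}\,e^{|x|^\beta/2}\Bigl(G\cdot\frac{x}{|x|}-\frac{V}{\varepsilon\,e^{|x|^\beta/2}}\Bigr)+c_e\,\varepsilon^2 t^{2\alpha}\,e^{|x|^\beta+|x|^m}+\text{(lower-order terms)},
\]
which displays exactly the combination controlled by hypothesis \eqref{Hp2 Prop: Time dependent Lyapunov functions in case of exponentially growing diffusions}.

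As in the polynomial case I would then split $(0,1)\times\R^d$ into a far-field and a near-field region, chosen so that the dominant $t$-dependence in the near-field region is a pure power of $t$. In the far-field region, factoring out $|x|^{\beta+m-1}e^{|x|^\beta/2+|x|^m}$ and applying \eqref{Hp2 Prop: Time dependent Lyapunov functions in case of exponentially growing diffusions} produces a strictly negative dominant factor $-\Lambda$ that absorbs every remaining positive contribution once $|x|$ is sufficiently large, yielding $LW\le 0$ there and $LW\le CW$ on the bounded remainder. In the near-field region I discard the negative $V$-term and majorize every remaining contribution by a pure power of $t$; tracking the most singular exponent then gives $LW\le C_3 t^{\alpha-\gamma(\beta+3m/2-1)}W$ for $\gamma$ suitably close to $1/m$. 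The hypothesis $\alpha>\frac{2\beta+m-2}{2m}$ is exactly what makes the exponent of $t$ strictly greater than $-1$, i.e.\ $h\in L^1(0,1)$. The second inequality \eqref{eq2: definition time dependent Lyapunov functions} is handled the same way: since the quadratic form is absent, hypothesis \eqref{Hp3 Prop: Time dependent Lyapunov functions in case of exponentially growing diffusions} now takes the role of \eqref{Hp2 Prop: Time dependent Lyapunov functions in case of exponentially growing diffusions} in killing the positive contributions in the far field, while the near-field analysis produces the same $h(t)$ after possibly enlarging $C_3$.

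The main technical obstacle, compared with the polynomial case, is the near-field bookkeeping. In the polynomial setting the dominant factor was the clean power $|x|^{2\beta+m-2}$, which becomes $t^{-\gamma(2\beta+m-2)}$ under $|x|\le t^{-\gamma}$. Here, two derivatives of $W$ produce the double exponential $e^{|x|^\beta+|x|^m}$, so the splitting radius has to be chosen much more delicately in order to convert this factor into a manageable power of $1/t$. At the same time one has to balance the lower-order factors $|x|^{\beta-1}$ and $|x|^{-m}$ coming from $D_iq_{ij}$ and from $D_{ij}\Phi(|x|)$, and it is this balancing act that pins down both the threshold $\gamma>1/m$ and the precise exponent $\beta+\tfrac{3m}{2}-1$ appearing in $h(t)$.
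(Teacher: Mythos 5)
Your plan is correct and follows essentially the same route as the paper's proof: the same explicit computation of $LW$, the same far-field/near-field splitting (the paper takes $e^{|x|^m}\gtrless t^{-\gamma m}$, precisely the choice you describe as converting the double exponential into a power of $1/t$), the same use of \eqref{Hp2 Prop: Time dependent Lyapunov functions in case of exponentially growing diffusions} and \eqref{Hp3 Prop: Time dependent Lyapunov functions in case of exponentially growing diffusions} in the far field, and the same identification of $t^{\alpha-\gamma(\beta+\frac{3}{2}m-1)}$ as the leading near-field term with $\alpha>\frac{2\beta+m-2}{2m}$ guaranteeing $h\in L^1(0,1)$. The bookkeeping you defer (e.g.\ the bound $\int_0^{|x|}e^{\tau^\beta/2}\,d\tau\le |x|e^{|x|^\beta/2}$ and the use of $\beta\le m$ to control $e^{|x|^\beta}$ in the near field) is exactly what the paper carries out, so there is no gap in the approach.
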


\begin{proof}
Throughout the proof we assume that $\abs{x}\geq 1$ so that $\abs{x}_*^s=\abs{x}^s$ for $s\ge 0$. The estimates can be extended to $\R^d$ by possibly choosing larger constants.

Let $t\in (0,1)$ and $\abs{x}\ge 1$. By direct computations we have
\begin{align*}
D_jW(t,x)=&\varepsilon t^\alpha \frac{x_j}{\abs{x}} e^\frac{\abs{x}^\beta}{2} W(t,x),\\
D_i(q_{ij}D_j W)(t,x)
=& \varepsilon t^\alpha \frac{1}{\abs{x}} e^\frac{\abs{x}^\beta}{2} D_iq_{ij}(x)x_j W(t,x) 
+ \frac{1}{2}\varepsilon\beta t^\alpha \abs{x}^{\beta-3} e^\frac{\abs{x}^\beta}{2} q_{ij}(x) x_i x_j W(t,x) \\ 
&+ \varepsilon t^\alpha \frac{1}{\abs{x}} e^\frac{\abs{x}^\beta}{2} q_{ij}(x) \delta_{ij} W(t,x)
- \varepsilon t^{\alpha} \frac{1}{\abs{x}^3} e^\frac{\abs{x}^\beta}{2} q_{ij}(x) x_i x_j W(t,x)\\
&+ \varepsilon^2 t^{2\alpha}\frac{1}{\abs{x}^2} e^{\abs{x}^\beta} q_{ij}(x) x_i x_j W(t,x).
\end{align*}
Hence we deduce that
\begin{align*}
LW(t,x)=& \partial_t W(t,x) +AW(t,x)\\
=& \varepsilon\alpha t^{\alpha-1} W(t,x) \int_0^{\abs{x}} e^{\frac{\tau^\beta}{2}}d\tau
+ \varepsilon t^\alpha \frac{1}{\abs{x}} e^\frac{\abs{x}^\beta}{2} W(t,x) \sum_{i,j=1}^d D_iq_{ij}(x)x_j \\
&+ \frac{1}{2}\varepsilon\beta t^\alpha \abs{x}^{\beta-3} e^\frac{\abs{x}^\beta}{2} W(t,x) \sum_{i,j=1}^d q_{ij}(x) x_i x_j 
+ \varepsilon t^\alpha \frac{1}{\abs{x}} e^\frac{\abs{x}^\beta}{2} W(t,x) \sum_{i,j=1}^d q_{ij}(x) \delta_{ij} \\
&- \varepsilon t^{\alpha} \frac{1}{\abs{x}^3} e^\frac{\abs{x}^\beta}{2} W(t,x) \sum_{i,j=1}^d q_{ij}(x) x_i x_j 
+ \varepsilon^2 t^{2\alpha}\frac{1}{\abs{x}^2} e^{\abs{x}^\beta} W(t,x) \sum_{i,j=1}^d q_{ij}(x) x_i x_j \\
&-V(x)W(t,x).
\end{align*}
First of all, we drop the negative term involving $\varepsilon$ in the right hand side of the previous equality. Second, we use the exponentially growth of the diffusion coefficients \eqref{Hp1 Prop: Time dependent Lyapunov functions in case of exponentially growing diffusions} to obtain that
\begin{align}\label{eq1 Prop: Time dependent Lyapunov functions in case of exponentially growing diffusions}
LW(t,x)
\leq & \varepsilon\alpha t^{\alpha-1} W(t,x) \int_0^{\abs{x}} e^{\frac{\tau^\beta}{2}}d\tau
+ \varepsilon t^\alpha \frac{1}{\abs{x}} e^\frac{\abs{x}^\beta}{2} W(t,x) G(x)\cdot x \notag\\
&+ \frac{1}{2}c_e\varepsilon\beta t^\alpha \abs{x}^{\beta-1} e^{\frac{\abs{x}^\beta}{2}+\abs{x}^m} W(t,x) 
+ d c_e\varepsilon t^\alpha \frac{1}{\abs{x}} e^{\frac{\abs{x}^\beta}{2}+\abs{x}^m} W(t,x)  \notag\\
&+ c_e\varepsilon^2 t^{2\alpha} e^{\abs{x}^\beta+\abs{x}^m} W(t,x) -V(x)W(t,x).
\end{align}
Since $t^\alpha\leq 1$, we can write the previous inequality as follows:
\begin{align}\label{eq2 Prop: Time dependent Lyapunov functions in case of exponentially growing diffusions}
LW(t,x)
\leq  \varepsilon t^\alpha\abs{x}^{\beta+m-1} e^{\abs{x}^\beta+\abs{x}^m} W(t,x)
\Bigg[& \frac{\alpha}{t} \abs{x}^{1-\beta-m}e^{-\abs{x}^\beta-\abs{x}^m} \int_0^{\abs{x}} e^{\frac{\tau^\beta}{2}}d\tau
+\frac{1}{2}c_e \beta \abs{x}^{-m} e^{-\frac{\abs{x}^\beta}{2}}\notag\\
&+ d c_e \abs{x}^{-\beta-m}e^{-\frac{\abs{x}^\beta}{2}} 
+c_e\varepsilon t^\alpha \abs{x}^{1-\beta-m}\notag\\
&+ \abs{x}^{1-\beta-m} e^{-\frac{\abs{x}^\beta}{2}-\abs{x}^m}\left( G\cdot \frac{x}{\abs{x}} - \frac{V}{\varepsilon e^{\frac{\abs{x}^\beta}{2}}}\right)\Bigg].
\end{align} 

\noindent Let $\gamma>\frac{1}{m}$. We now distinguish two cases.

\textit{Case 1:} $\displaystyle e^{\abs{x}^m}\geq\frac{1}{t^{\gamma m}}$.

First, we observe that
$$\int_0^{\abs{x}} e^{\frac{\tau^\beta}{2}}d\tau\leq \abs{x}e^{\frac{\abs{x}^\beta}{2}}. $$
Then, since $t^\alpha\leq 1$ and $e^{-\frac{\abs{x}^\beta}{2}}\leq 1$, by \eqref{eq2 Prop: Time dependent Lyapunov functions in case of exponentially growing diffusions} we get
\begin{align*}
LW(t,x)
\leq  \varepsilon t^\alpha\abs{x}^{\beta+m-1} e^{\abs{x}^\beta+\abs{x}^m} W(t,x)
\Bigg[& \alpha \abs{x}^{2-\beta-m}e^{\left( \frac{1}{\gamma m}-1\right)\abs{x}^m} 
+\frac{1}{2}c_e \beta \abs{x}^{-m}\\
&+ d c_e \abs{x}^{-\beta-m}
+c_e\varepsilon \abs{x}^{1-\beta-m}\\
&+ \abs{x}^{1-\beta-m} e^{-\frac{\abs{x}^\beta}{2}-\abs{x}^m}\left( G\cdot \frac{x}{\abs{x}} - \frac{V}{\varepsilon e^{\frac{\abs{x}^\beta}{2}}}\right)\Bigg].
\end{align*} 
Moreover, $e^{\left( \frac{1}{\gamma m}-1\right)\abs{x}^m}\leq 1$ because $\gamma>\frac{1}{m}$. Thus, we derive that
\begin{align*}
LW(t,x)
\leq  \varepsilon t^\alpha\abs{x}^{\beta+m-1} e^{\abs{x}^\beta+\abs{x}^m} W(t,x)
\Bigg[& \alpha \abs{x}^{2-\beta-m}
+\frac{1}{2}c_e \beta \abs{x}^{-m}\\
&+ d c_e \abs{x}^{-\beta-m}
+c_e\varepsilon \abs{x}^{1-\beta-m}\\
&+ \abs{x}^{1-\beta-m} e^{-\frac{\abs{x}^\beta}{2}-\abs{x}^m}\left( G\cdot \frac{x}{\abs{x}} - \frac{V}{\varepsilon e^{\frac{\abs{x}^\beta}{2}}}\right)\Bigg].
\end{align*} 
If $\abs{x}$ is large enough, say $\abs{x}>K$ for some $K>1$, we apply \eqref{Hp2 Prop: Time dependent Lyapunov functions in case of exponentially growing diffusions} to deduce that
\begin{align*}
LW(t,x)
\leq  \varepsilon t^\alpha\abs{x}^{\beta+m-1} e^{\abs{x}^\beta+\abs{x}^m} W(t,x)
\Bigg[& \alpha \abs{x}^{2-\beta-m}
+\frac{1}{2}c_e \beta \abs{x}^{-m}+ d c_e \abs{x}^{-\beta-m}\\
&+c_e\varepsilon \abs{x}^{1-\beta-m}-\Lambda\Bigg].
\end{align*} 
We now show that, for a suitable choice of $K$, the quantity within square brackets is negative. 
Since $\beta \ge \frac{m}{2}+1$ and $m\ge 2$, we have $\beta \ge 2$ and hence
%
\begin{align*}
&\alpha \abs{x}^{2-\beta-m}
+\frac{1}{2}c_e \beta \abs{x}^{-m}+ d c_e \abs{x}^{-\beta-m}
+c_e\varepsilon \abs{x}^{1-\beta-m}-\Lambda\\
&\leq  \left(\alpha +\frac{1}{2}c_e \beta + d c_e  +c_e\varepsilon \right) K^{-m}-\Lambda.
\end{align*} 
As a result, by taking
$$K\geq \bigg(\frac{\alpha +\frac{1}{2}c_e \beta + d c_e  +c_e\varepsilon }{\Lambda}\bigg)^{\frac{1}{m}},$$
we finally get $LW(t,x)\leq 0$. For the remaining values of $x$, $LW$ is bounded by a constant. In both cases we have
\begin{equation*}
LW(t,x)\leq CW(t,x),
\end{equation*}
for all $t\in (0,1)$, $e^{\abs{x}^m}\geq\frac{1}{t^{\gamma m}}$ and for some constant $C>0$.

\textit{Case 2:} $\displaystyle e^{\abs{x}^m}<\frac{1}{t^{\gamma m}}$.

Notice that $\abs{x}<t^{-\gamma}$ and, since $\beta\leq m$, we have
$$e^{\abs{x}^\beta}<\frac{1}{t^{\gamma m}} \,\hbox{\ for }\abs{x} \ge 1.$$
Then, if $\abs{x}$ is large enough, using $\beta >1$, and combining \eqref{Hp2 Prop: Time dependent Lyapunov functions in case of exponentially growing diffusions} and \eqref{eq2 Prop: Time dependent Lyapunov functions in case of exponentially growing diffusions}, we obtain that
\begin{align*}
LW(t,x)
\leq \Bigg[ &\varepsilon \alpha t^{\alpha-1-\gamma \left(\frac{m}{2}+1\right)}+\frac{1}{2}c_e \varepsilon \beta t^{\alpha-\gamma \left( \beta+\frac{3}{2}m-1\right)}+dc_e\varepsilon t^{\alpha-\frac{3}{2}\gamma m}+c_e \varepsilon^2 t^{2\alpha-2\gamma m}\\
&-\Lambda \varepsilon t^\alpha \abs{x}^{\beta+m-1} e^{\abs{x}^\beta+\abs{x}^m}\Bigg]W(t,x).
\end{align*}
Dropping the last negative term, we find
\begin{align*}
LW(t,x)
\leq \Bigg[ &\varepsilon \alpha t^{\alpha-1-\gamma \left(\frac{m}{2}+1\right)}+\frac{1}{2}c_e \varepsilon \beta t^{\alpha-\gamma \left( \beta+\frac{3}{2}m-1\right)}+dc_e\varepsilon t^{\alpha-\frac{3}{2}\gamma m}+c_e \varepsilon^2 t^{2\alpha-2\gamma m}\Bigg]W(t,x).
\end{align*}
Since $\gamma>\frac{1}{m}$ and $\beta\ge \frac{m}{2}+1$, the leading term is $t^{\alpha-\gamma \left( \beta+\frac{3}{2}m-1\right)}$. Therefore, we gain
$$LW(t,x)\leq C t^{\alpha-\gamma \left( \beta+\frac{3}{2}m-1\right)}W(t,x),$$
for all $t\in (0,1)$, $e^{\abs{x}^m}<\frac{1}{t^{\gamma m}}$ and for some constant $C>0$.

To sum up, there exists a constant $C_3>0$ such that
$$LW(t,x)\leq h(t) W(t,x),$$
for all $t\in (0,1)$ and $x\in\R^d$, where $h(t)=C_3 t^{\alpha-\gamma \left( \beta+\frac{3}{2}m-1\right)}$.

Moreover, we choose $\gamma<\frac{\alpha+1}{\beta+\frac{3}{2}m-1}$, which is possible since $\alpha >\frac{2\beta+m-2}{2m}$, so that $\alpha-\gamma \left( \beta+\frac{3}{2}m-1\right)>-1$ and $h\in L^1((0,1))$.
We conclude that condition \eqref{eq1: definition time dependent Lyapunov functions} is satisfied.

To show \eqref{eq2: definition time dependent Lyapunov functions} we compute
\begin{align*}
\Delta W(t,x)
=& \frac{1}{2} \varepsilon\beta t^\alpha \abs{x}^{\beta-1}e^{\frac{\abs{x}^\beta}{2}}W(t,x) 
+d\varepsilon t^\alpha \frac{1}{\abs{x}} e^\frac{\abs{x}^\beta}{2} W(t,x)
-\varepsilon t^\alpha \frac{1}{\abs{x}} e^\frac{\abs{x}^\beta}{2} W(t,x)\\
&+\varepsilon^2 t^{2\alpha} e^{\abs{x}^\beta} W(t,x).
\end{align*}
Hence,
\begin{align}\label{eq3 Prop: Time dependent Lyapunov functions in case of exponentially growing diffusions}
\partial_t W(t,x)+\eta \Delta W(t,x)-V(x)W(t,x)
=& \varepsilon\alpha t^{\alpha-1}W(t,x) \int_0^{\abs{x}} e^{\frac{\tau^\beta}{2}}d\tau + \frac{1}{2} \eta\varepsilon\beta t^\alpha \abs{x}^{\beta-1}e^{\frac{\abs{x}^\beta}{2}}W(t,x) \notag\\
&+d\eta\varepsilon t^\alpha \frac{1}{\abs{x}} e^\frac{\abs{x}^\beta}{2} W(t,x)
-\eta\varepsilon t^\alpha \frac{1}{\abs{x}} e^\frac{\abs{x}^\beta}{2} W(t,x)\notag\\
&+\eta\varepsilon^2 t^{2\alpha} e^{\abs{x}^\beta} W(t,x)-V(x)W(t,x)\notag\\
\leq & \varepsilon\alpha t^{\alpha-1}W(t,x) \int_0^{\abs{x}} e^{\frac{\tau^\beta}{2}}d\tau + \frac{1}{2} \eta\varepsilon\beta t^\alpha \abs{x}^{\beta-1}e^{\frac{\abs{x}^\beta}{2}}W(t,x) \notag\\
&+d\eta\varepsilon t^\alpha \frac{1}{\abs{x}} e^\frac{\abs{x}^\beta}{2} W(t,x)
+\eta\varepsilon^2 t^{2\alpha} e^{\abs{x}^\beta} W(t,x)\notag\\
&-V(x)W(t,x).
\end{align}
We use the same strategy as above. We let $\gamma>\frac{1}{m}$ and we consider two cases.

\textit{Case 1:} $\displaystyle e^{\abs{x}^m}\geq\frac{1}{t^{\gamma m}}$.

By \eqref{eq3 Prop: Time dependent Lyapunov functions in case of exponentially growing diffusions} we obtain
\begin{align*}
\partial_t W(t,x)+\eta \Delta W(t,x)-V(x)W(t,x)
\leq &\varepsilon t^\alpha \abs{x}^{\beta+m-1} e^{\abs{x}^\beta+\abs{x}^m}W(t,x)\Bigg[\alpha \abs{x}^{2-\beta-m} e^{\left(\frac{1}{\gamma m}-1\right) \abs{x}^m}\\
& +\frac{1}{2}\eta\beta\abs{x}^{-m}+d\eta \abs{x}^{-\beta-m}+\eta\varepsilon \abs{x}^{1-\beta-m}\\
&-\frac{1}{\varepsilon} V(x) \abs{x}^{1-\beta-m} e^{-\abs{x}^\beta-\abs{x}^m}\Bigg].
\end{align*}
Using \eqref{Hp3 Prop: Time dependent Lyapunov functions in case of exponentially growing diffusions} and the fact that $\gamma>\frac{1}{m}$, we get
\begin{align*}
\partial_t W(t,x)+\eta \Delta W(t,x)-V(x)W(t,x)
\leq &\varepsilon t^\alpha \abs{x}^{\beta+m-1} e^{\abs{x}^\beta+\abs{x}^m}W(t,x)\Big[\alpha \abs{x}^{2-\beta-m} \\
& +\frac{1}{2}\eta\beta\abs{x}^{-m}+d\eta \abs{x}^{-\beta-m}+\eta\varepsilon \abs{x}^{1-\beta-m}-\frac{c}{\varepsilon}\Big].
\end{align*}
If $\abs{x}$ is large enough, the quantity within square brackets is negative. Otherwise, we can control it with a constant. In both cases, we deduce that
$$\partial_t W(t,x)+\eta \Delta W(t,x)-V(x)W(t,x)\leq C,$$
for all $t\in (0,1)$, $e^{\abs{x}^m}\geq\frac{1}{t^{\gamma m}}$ and for some constant $C>0$.

\textit{Case 2:} $\displaystyle e^{\abs{x}^m}<\frac{1}{t^{\gamma m}}$.

Since $\beta\leq m$ and $V\geq 0$, \eqref{eq3 Prop: Time dependent Lyapunov functions in case of exponentially growing diffusions} yields
\begin{align*}
\partial_t W(t,x)+\eta \Delta W(t,x)-V(x)W(t,x)
\leq &\Big[\varepsilon\alpha t^{\alpha-1-\gamma \left(\frac{m}{2}+1\right)}
+\frac{1}{2}\eta\beta t^{\alpha-\gamma \left( \beta+\frac{m}{2}-1\right)}\\
&+d\eta \varepsilon t^{\alpha-\gamma \frac{m}{2}}+\eta\varepsilon^2 t^{2\alpha-\gamma m}\Big]W(t,x)\\
\leq & C t^{\alpha-\gamma \left( \beta+\frac{3}{2}m-1\right)} W(t,x),
\end{align*}
for some constant $C$.
Therefore, by possibly choosing a larger $C_3$, we gain \eqref{eq2: definition time dependent Lyapunov functions}. Then, $W$ is a time dependent Lyapunov function for $L$. The last assertion follows from Proposition \ref{Prop: Time dependent Lyapunov functions are integrable with respect to pdy}.
\end{proof}

\section{Kernel estimates and spectral properties for general Schr\"odinger type operators}\label{S4}

In this section we establish pointwise upper bounds for the kernel $p$ and study some spectral
properties of $A_{\min}$ with either polynomial or exponential coefficients.

To obtain pointwise kernel estimates one needs the following assumptions.

\begin{hyp}\label{h.2}
Fix $0<t\le 1,\,x\in \R^d$ and $0<a_0<a<b<b_0<t$. Let us consider two time dependent Lyapunov functions $W_1,\,W_2$ with $W_1\le W_2$ and a weight function $1\le w\in C^{1,2}((0,t)\times \R^d)$ such that there exist $k>d+2$ and constants $c_1,\ldots ,c_5$ with
\begin{eqnarray*}
& & w\le c_1w^{\frac{k-2}{k}}W_1^{\frac{2}{k}},\,|Q\nabla w|\le c_2w^{\frac{k-1}{k}}W_1^{\frac{1}{k}},\,|{\div}(Q\nabla w)|\le c_3w^{\frac{k-2}{k}}W_1^{\frac{2}{k}},\\
& & |\partial_t w|\le c_4w^{\frac{k-2}{k}}W_1^{\frac{2}{k}},\,V^{\frac{1}{2}}\le c_5w^{-\frac{1}{k}}W_2^{\frac{1}{k}}
\end{eqnarray*}
on $[a_0,b_0]\times \R^d$.
\end{hyp}

The following result can be deduced as in \cite[Theorem 12.4]{KunzeLorenziRhandi1} and \cite[Theorem 4.2]{KunzeLorenziRhandi2}. 

\begin{theo}\label{theo-estimate bounded case}
Assume Hypotheses \ref{h.1}, \ref{h.2}, $k>d+2$ and $q_{ij},\,D_kq_{ij}$ are bounded on $\R^d$. Then there is a constant $C>0$ depending only on  $d,\,k$ and $\eta$ such that
\begin{align}\label{Estimate for wp in the bounded case}
w(t,y)p(t,x,y) \le & C\left[c_1^{\frac{k}{2}}\sup_{s\in (a_0,b_0)}\xi_{W_1}(s,x)+\left(
c_2^k+\frac{c_1^{\frac{k}{2}}}{(b_0-b)^{\frac{k}{2}}}+c_3^{\frac{k}{2}}
+c_4^{\frac{k}{2}}\right)\int_{a_0}^{b_0}\xi_{W_1}(s,x)\,ds\right.\notag\\
& \quad \quad \left. +c_5^k\int_{a_0}^{b_0}\xi_{W_2}(s,x)\,ds\right],
\end{align}
for all $(t,y)\in (a,b)\times \R^d$ and any fixed $x\in \R^d$.
\end{theo}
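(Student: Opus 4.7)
The plan is to follow the Moser iteration scheme developed in \cite[Theorem 12.4]{KunzeLorenziRhandi1} and adapted to unbounded diffusion coefficients in \cite[Theorem 4.2]{KunzeLorenziRhandi2}, with the only modification that in the present divergence-form setting the transport term appearing in the non-divergence analogues is replaced by the natural divergence operator $\div(Q\nabla\cdot)$. The boundedness assumption on $q_{ij}$ and $D_k q_{ij}$ is what allows the classical parabolic regularity theory used at the technical core of the argument; it is meant to be removed later by an approximation procedure, which is why the final estimate must depend only on $d$, $k$, $\eta$ and the constants $c_1,\dots,c_5$.

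First I would derive the evolution equation for the product $u(s,y):=w(s,y)\,p(s,x,y)$ on $(a_0,b_0)\times\R^d$. Using the symmetry of the kernel and the fact that $p(\cdot,x,\cdot)$ solves $\partial_s p=\div(Q\nabla p)-Vp$ in the $y$-variable, the product rule yields
\begin{equation*}
\partial_s u - \div(Q\nabla u) + V u \;=\; \bigl(\partial_s w - \div(Q\nabla w)\bigr)\,p \;-\; 2\,Q\nabla w\cdot \nabla p,
\end{equation*}
so the driving data of the equation are exactly the four quantities $\partial_s w$, $\div(Q\nabla w)$, $Q\nabla w$ and $Vw$ controlled in Hypothesis \ref{h.2}.

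Second I would run a Moser iteration on $u$: fix a smooth cutoff $\phi$ in time which vanishes near $s=a_0$ and equals $1$ on $[a,b]$, and test the PDE against $\phi^2 u^{q-1}$ for an increasing sequence of exponents $q$. Integration by parts produces an $\eta\,\|\phi\,\nabla u^{q/2}\|_2^2$ term on the good side, while the remainder is decomposed according to the five bounds of Hypothesis \ref{h.2}; the gradient cross-terms coming from $Q\nabla w\cdot\nabla p$ are absorbed via Young's inequality using only the ellipticity constant $\eta$, and the potential term is dealt with using $V^{1/2}\le c_5 w^{-1/k}W_2^{1/k}$ so that it re-expresses itself in terms of $u$ and $W_2$. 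Combining this with the Nash/Sobolev inequality in the sharp form requiring $k>d+2$ gives the standard recursive inequality of Moser's scheme, and upon iteration an $L^\infty$ bound for $u$ on $(a,b)\times\R^d$ in terms of a single $L^1$-type norm of $w^{k-2}W_1^2 p$ and $W_2^k p$ on $(a_0,b_0)\times\R^d$, where the factor $(b_0-b)^{-k/2}$ appears through $\|\phi'\|_\infty\asymp(b_0-b)^{-1}$.

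Third I would convert this $L^1$-type norm into the shape appearing in \eqref{Estimate for wp in the bounded case}. Using $w\le c_1^{k/2}W_1$ (which follows from the first bound of Hypothesis \ref{h.2} raised to the power $k/2$) and the remaining pointwise bounds, each summand of the Moser output reduces to an integral of $W_1\,p$ or $W_2\,p$ in $y$, which by Proposition \ref{Prop: Time dependent Lyapunov functions are integrable with respect to pdy} is nothing but $\xi_{W_1}(s,x)$ or $\xi_{W_2}(s,x)$. Bookkeeping the powers of the $c_i$ produced at each step gives the claimed pattern ($c_1^{k/2},c_3^{k/2},c_4^{k/2}$ and $(b_0-b)^{-k/2}$ paired with $\xi_{W_1}$, $c_2^k$ from the gradient term, and $c_5^k$ attached to $\xi_{W_2}$).

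The main obstacle is precisely this last bookkeeping: one must ensure, step by step in the iteration, that the cross terms arising from the mixed derivatives of $u$ are absorbed \emph{only} against the ellipticity $\eta$ and the structural constants of Hypothesis \ref{h.2}, never against bounds on $q_{ij}$ or $D_k q_{ij}$. This is what makes the eventual removal of the boundedness assumption by approximation possible, and is the technical core of the statements in \cite{KunzeLorenziRhandi1,KunzeLorenziRhandi2} that we are transposing here.
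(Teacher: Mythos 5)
Your proposal is correct and follows essentially the same route as the paper, which itself simply defers to the Moser-iteration arguments of \cite[Theorem 12.4]{KunzeLorenziRhandi1} and \cite[Theorem 4.2]{KunzeLorenziRhandi2}; your equation for $u=wp$ and your tracking of where each $c_i$ and the factor $(b_0-b)^{-k/2}$ enter match the intended adaptation to the divergence-form setting. You also correctly identify the key point the paper emphasizes, namely that the constant must depend only on $d$, $k$ and $\eta$ (not on $\norm{Q}_\infty$) so that the boundedness assumption can later be removed by approximation.
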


Notice that the assumption of bounded diffusion coefficients was crucial to apply \cite[Theorem 3.7]{KunzeLorenziRhandi2}.
The fact that the constant $C$ does not depend on $\norm{Q}_{\infty}$ will allow us to extend this result to the general case.

By an approximation argument one can extend the above result to  the case of unbounded diffusion coefficients. The proof of the following result is similar to the one in \cite[Theorem 12.6]{KunzeLorenziRhandi1}. The only difference is that here we are concerned with autonomous problems. This is the reason why we assume \eqref{Further assumption2} for a fixed $t_0\in (0,t)$, compare with \cite[Hypothesis 12.5]{KunzeLorenziRhandi1}.

\begin{theo}\label{theo-estimate}
In addition to Hypotheses \ref{h.1}, \ref{h.2} and $k>d+2$, we assume that 
\begin{enumerate}
\item on $[a_0,b_0]\times \R^d$ we have
\begin{equation}\label{Further assumption1}
|\Delta w|\le c_6w^{\frac{k-2}{k}}W_1^{\frac{2}{k}};
\end{equation}
\item there is $t_0\in (0,t)$ such that
\begin{equation}\label{Further assumption2}
|Q\nabla W_1(t_0, \cdot)|\le c_7W_1(t_0, \cdot)w^{-\frac{1}{k}}W_2^{\frac{1}{k}};
\end{equation}
\item there are $c_0>0$ and $\sigma\in (0,1)$ such that
\begin{equation}\label{Further assumption3}
W_2\leq c_0 Z^{1-\sigma}.
\end{equation}
\end{enumerate}
Then there is a constant $C>0$ depending only on  $d,\,k$ and $\eta$ such that
\begin{eqnarray}\label{Estimate for wp in the general case}
w(t,y)p(t,x,y) &\le & C\left[c_1^{\frac{k}{2}}\sup_{s\in (a_0,b_0)}\xi_{W_1}(s,x)+\left(
c_2^k+\frac{c_1^{\frac{k}{2}}}{(b_0-b)^{\frac{k}{2}}}+c_3^{\frac{k}{2}}
+c_4^{\frac{k}{2}}+c_6^{\frac{k}{2}}\right)\int_{a_0}^{b_0}\xi_{W_1}(s,x)\,ds\right.\notag\\
& & \quad \quad \left. +(c_5^k+c_2^\frac{k}{2}c_7^\frac{k}{2})\int_{a_0}^{b_0}\xi_{W_2}(s,x)\,ds\right],
\end{eqnarray}
for all $(t,y)\in (a,b)\times \R^d$ and fixed $x\in \R^d$.
\end{theo}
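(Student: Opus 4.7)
The strategy is to reduce to the bounded-coefficient case handled in Theorem \ref{theo-estimate bounded case}, exploiting the fact that the constant $C$ there is independent of $\norm{Q}_\infty$. Pick a smooth cutoff $\vartheta_n\in C_c^\infty(\R^d)$ with $\vartheta_n\equiv 1$ on $B_n$, $\vartheta_n\equiv 0$ outside $B_{2n}$ and $0\le\vartheta_n\le 1$, and set
\[
Q_n := \vartheta_n Q + (1-\vartheta_n)\eta I,\qquad A_n := \div(Q_n\nabla)-V.
\]
Then $Q_n$ is symmetric and uniformly elliptic with the same ellipticity constant $\eta$, and both $Q_n$ and $DQ_n$ are bounded on $\R^d$ for each $n$. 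Let $p_n$ denote the integral kernel of the minimal $L^2$-semigroup generated by $A_n$ (constructed as in Section \ref{S2}).

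Next I verify that Hypothesis \ref{h.2} transfers to $A_n$ with constants controlled uniformly in $n$. A short calculation using uniform ellipticity (which gives $|\nabla w|\le \eta^{-1}|Q\nabla w|$) yields $|Q_n\nabla w|\le 2|Q\nabla w|$, while
\[
\div(Q_n\nabla w) = \vartheta_n\div(Q\nabla w) + (1-\vartheta_n)\eta\Delta w + \nabla\vartheta_n\cdot (Q-\eta I)\nabla w,
\]
where the third term is supported in $B_{2n}\setminus B_n$ and is controlled by the original bound on $|Q\nabla w|$ together with $\norm{\nabla\vartheta_n}_\infty \le C$. The term $\eta\Delta w$ is precisely the one absorbed by the new assumption \eqref{Further assumption1}. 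Hence Theorem \ref{theo-estimate bounded case} applies to $A_n$ and delivers \eqref{Estimate for wp in the bounded case} for $p_n$ with constants dominated by fixed multiples of $c_1,\dots,c_6$, independent of $n$.

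The main obstacle is then the uniform control of $\xi^{(n)}_{W_i}(s,x):=\int_{\R^d}p_n(s,x,y)W_i(s,y)\,dy$, since $W_1$ and $W_2$ are Lyapunov functions for $L=\partial_t+A$ but not, a priori, for $L_n=\partial_t+A_n$ (outside $B_n$, $A_n$ behaves like $\eta\Delta-V$). This is exactly where conditions \eqref{Further assumption2} and \eqref{Further assumption3} come in. Condition \eqref{Further assumption3}, combined with $AZ\le M$ and $\eta\Delta Z-VZ\le M$ from Hypothesis \ref{h.1}, shows that $Z$ is a Lyapunov function simultaneously for all $L_n$; a convexity argument applied to the concave map $\tau\mapsto\tau^{1-\sigma}$ propagates this property to $Z^{1-\sigma}$, whence $\int p_n(s,x,y) W_2(s,y)\,dy\le c_0\int p_n(s,x,y) Z(y)^{1-\sigma}\,dy\le C$ uniformly in $n$. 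Condition \eqref{Further assumption2} supplies the control on $Q\nabla W_1(t_0,\cdot)$ needed to reproduce, for $A_n$, the integration-by-parts argument of Proposition \ref{Prop: Time dependent Lyapunov functions are integrable with respect to pdy} and obtain an analogous uniform bound on $\xi^{(n)}_{W_1}$.

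Once \eqref{Estimate for wp in the bounded case} holds for $p_n$ with all quantities uniformly controlled in $n$, the last step is the passage $n\to\infty$. By a standard consistency/monotone-convergence argument for minimal kernels of approximating uniformly elliptic operators (cf.\ the construction in Theorem \ref{Thm: minimal realization} and \cite{MPW}), $p_n(t,x,y)\to p(t,x,y)$ as $n\to\infty$. Applying Fatou's lemma on the left-hand side and dominated (or monotone) convergence on the right yields \eqref{Estimate for wp in the general case}. The genuinely non-trivial step is the uniform control of the Lyapunov integrals $\xi^{(n)}_{W_i}$; once that is in hand, the rest of the proof is a routine bookkeeping of constants through the approximation scheme.
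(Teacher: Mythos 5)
Your overall strategy --- truncate $Q$ to $Q_n=\vartheta_nQ+(1-\vartheta_n)\eta I$, invoke Theorem \ref{theo-estimate bounded case} with its $\norm{Q}_\infty$-independent constant, and pass to the limit --- is indeed the route the paper intends (it gives no proof of its own and simply points to \cite[Theorem 12.6]{KunzeLorenziRhandi1}), and you correctly identify why \eqref{Further assumption1} is needed: the truncated operator produces the term $(1-\vartheta_n)\eta\Delta w$. However, the step you yourself flag as ``the genuinely non-trivial'' one is where your argument breaks. You claim that $AZ\le M$ and $\eta\Delta Z-VZ\le M$ make $Z$ a Lyapunov function \emph{simultaneously} for all $A_n$. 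Writing $A_nZ=\vartheta_n AZ+(1-\vartheta_n)(\eta\Delta Z-VZ)+\nabla\vartheta_n\cdot(Q-\eta I)\nabla Z$, the first two terms are $\le M$, but the commutator term lives on $B_{2n}\setminus B_n$, where both $Q$ and $\nabla Z$ are large; even with $\norm{\nabla\vartheta_n}_\infty\le C/n$ one gets, in the model case $Q\sim|x|^mI$, $Z=e^{\varepsilon|x|^\beta}$, a contribution of order $n^{m+\beta-2}Z$ on the annulus, which is not uniformly bounded. The same commutator obstructs the claim that $W_1,W_2$ remain time dependent Lyapunov functions for $L_n$ with the \emph{same} $h$; this is exactly the difficulty the extra hypotheses are designed to circumvent, and your proposal does not actually resolve it. (Note also that the paper's definition of a time dependent Lyapunov function requires \eqref{eq2: definition time dependent Lyapunov functions} precisely so that $W$ behaves well where $Q_n=\eta I$; your sketch does not use this.)

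Second, your reading of \eqref{Further assumption2} is inconsistent with the statement being proved. That condition is assumed at a \emph{single} time $t_0$, so it cannot be used to rerun the integration-by-parts of Proposition \ref{Prop: Time dependent Lyapunov functions are integrable with respect to pdy} for $A_n$ over the whole interval $(a_0,b_0)$; and if it were used only to obtain a uniform bound on $\xi^{(n)}_{W_1}$, the constant $c_7$ could not survive into \eqref{Estimate for wp in the general case}. Its appearance there as the product $c_2^{k/2}c_7^{k/2}$ in front of $\int_{a_0}^{b_0}\xi_{W_2}(s,x)\,ds$ shows that $|Q\nabla w|\cdot|Q\nabla W_1|$ is estimated by $c_2c_7\,w^{(k-2)/k}W_1^{(k+1)/k}W_2^{1/k}$ inside the iteration scheme itself (an initial-time boundary term at $t_0$ in the autonomous setting), producing a genuinely new term in the pointwise bound. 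Finally, ``monotone convergence'' of $p_n$ to $p$ is not available here --- the $Q_n$ are not ordered in $n$ --- so the convergence of the kernels must come from interior Schauder estimates and a compactness/identification argument. In short: right skeleton, but the two hypotheses \eqref{Further assumption2}--\eqref{Further assumption3} are doing different (and harder) work than your proposal assigns to them, and the uniformity in $n$ that everything hinges on is asserted rather than proved.
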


In the following subsections we apply Theorem \ref{theo-estimate} to obtain explicit kernel estimates in the case of polynomially or exponentially coefficients. Moreover we prove in these cases the compactness of the semigroups and deduce estimates of the eigenfuncions.

\subsection{Polynomially growing coefficients}\label{S51}

\smallskip\noindent
Here we apply the results of the previous sections to the case of operators with polynomial diffusion coefficients and potential terms.

Consider $Q(x)=(1+|x|^m_*)I$ and $V(x)=|x|^s$ with $s>|m-2|$ and $m>0$. To apply Theorem \ref{theo-estimate} we set
\[
w(t,x)=e^{\varepsilon t^\alpha |x|_*^\beta} \hbox{\ and } W_j(t,x)=e^{\varepsilon_j t^\alpha |x|_*^\beta},
\]
where $j=1,2$, $\beta=\frac{s-m+2}{2},\,0<\varepsilon<\varepsilon_1<\varepsilon_2<\frac{1}{\beta}$ and $\alpha>\frac{\beta}{\beta+m-2}$.

\begin{theo}\label{Thm: Kernel estimates in case of polynomially growing diffusions}
Let $p$ be the integral kernel associated with the operator $A$ with $Q(x)=(1+|x|^m_*)I$ and $V(x)=|x|^s$, where $s>|m-2|$ and $m> 0$. Then
$$p(t,x,y)\le t^{1-\frac{\alpha(2m\vee s)}{s-m+2}k} e^{-\frac{\varepsilon}{2} t^\alpha |x|_*^{\frac{s-m+2}{2}}}
e^{-\frac{\varepsilon}{2} t^\alpha |y|_*^{\frac{s-m+2}{2}}}$$
for $k>d+2$ and any $t\in (0,1),\,x,y\in \R^d$.
\end{theo}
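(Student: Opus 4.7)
The strategy is to specialize Theorem \ref{theo-estimate} to the present operator with the $w$, $W_1$, $W_2$ chosen just above the statement, and then to use the self-adjointness of $A_{\min}$ (Theorem \ref{Thm: minimal realization}) to symmetrize the resulting one-sided bound.

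First, I check Hypothesis \ref{h.1}: pick $\varepsilon_0 \in (\varepsilon_2, 1/\beta)$ and set $Z(x)=e^{\varepsilon_0|x|_*^\beta}$. Because $\beta=(s-m+2)/2$ forces $m+2\beta-2=s$, the leading $|x|$-powers in $\div(Q\nabla Z)$ and in $VZ$ coincide, and $\varepsilon_0\beta<1$ makes their combined coefficient negative; hence $AZ$ and $\eta\Delta Z-VZ$ are bounded above. Next, I apply Proposition \ref{Prop: Time dependent Lyapunov functions in case of polynomially growing diffusions} to each $W_j$: $\beta>(2-m)\vee 0$ follows from $s>|m-2|$, \eqref{Hp1 Prop: Time dependent Lyapunov functions in case of polynomially growing diffusions} holds with $c_q=1$, the same cancellation reduces \eqref{Hp2 Prop: Time dependent Lyapunov functions in case of polynomially growing diffusions} to $\varepsilon_j\beta<1$, and \eqref{Hp3 Prop: Time dependent Lyapunov functions in case of polynomially growing diffusions} reduces to $V(x)|x|^{2-2\beta-m}\equiv 1$. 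Therefore $\xi_{W_j}(t,x)\le C_2$ uniformly, and \eqref{Further assumption3} holds with $\sigma=1-\varepsilon_2/\varepsilon_0 \in (0,1)$.

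Fix now $a_0=t/4$, $a=t/3$, $b=t/2$, $b_0=t$, $t_0=t/2$, and compute the constants in Hypothesis \ref{h.2} and in \eqref{Further assumption1}--\eqref{Further assumption2}. Each ratio has the form (polynomial in $|x|$) $\cdot\, e^{-\kappa s^\alpha|x|^\beta}$, attaining its maximum at $|x|^\beta\sim s^{-\alpha}$; this yields $c_1,c_6=O(1)$, $c_4=O(t^{-1})$, $c_2,c_7=O(t^{-\alpha(m-1)/\beta})$ when $m>1$, $c_3=O(t^{-\alpha(m-2)/\beta})$ when $m>2$, and crucially $c_5=O(t^{-\alpha s/(2\beta)})$. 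Inserting these scalings into \eqref{Estimate for wp in the general case}, together with $\sup \xi_{W_1},\int\xi_{W_j}\le C_2$ and $(b_0-a_0),(b_0-b)\sim t$, every summand is either $O(1)$ or of the form $t^{1-\alpha X k/(s-m+2)}$ with $X\in\{s,\,2(m-1),\,m-2\}$, or of the form $t^{1-k/2}$. Each such $X$ is at most $2m\vee s$; and since $\alpha>\beta/(\beta+m-2)=(s-m+2)/(s+m-2)$, a short case distinction ($s\ge 2m$ vs.\ $s<2m$, using $s>m-2$ in the first and $s<2m\le 3m+2$ in the second) yields $\alpha(2m\vee s)/(s-m+2)>1/2$, so $t^{1-k/2}\le t^{1-\alpha(2m\vee s)k/(s-m+2)}$ for $t\le 1$. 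Consequently
\[
w(t,y)\,p(t,x,y)\le C\,t^{1-\frac{\alpha(2m\vee s)}{s-m+2}k}\qquad\text{for all } t\in(0,1),\ x,y\in\R^d.
\]

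Dividing by $w(t,y)=e^{\varepsilon t^\alpha|y|_*^\beta}$ gives a bound with $|y|$-decay only. Since $A_{\min}$ is selfadjoint, the kernel is symmetric, $p(t,x,y)=p(t,y,x)$; exchanging $x$ and $y$ in the previous step produces the analogous bound with $|x|$-decay, and the identity $p(t,x,y)=\sqrt{p(t,x,y)p(t,y,x)}$ halves the $\varepsilon$ in each exponential factor, yielding the stated estimate. The main technical obstacle is the bookkeeping of how each of $c_1,\ldots,c_7$ scales in $t$ and verifying that the singular contribution $t^{1-k/2}$ coming from $c_4^{k/2}$ and from the $(b_0-b)^{-k/2}$ term does not break the stated bound; this is exactly where the hypothesis $\alpha>\beta/(\beta+m-2)$ enters, via the strict inequality $\alpha(2m\vee s)/(s-m+2)>1/2$.
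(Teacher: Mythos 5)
Your proposal is correct and follows essentially the same route as the paper's proof: verify the hypotheses of Proposition \ref{Prop: Time dependent Lyapunov functions in case of polynomially growing diffusions} and of Theorem \ref{theo-estimate} for the chosen $w, W_1, W_2$, track the $t$-dependence of $c_1,\dots,c_7$ via the elementary bound on $z^\gamma e^{-\tau z^\beta}$, reduce everything to powers $t^{1-\alpha Xk/(s-m+2)}$ dominated by $t^{1-\alpha(2m\vee s)k/(s-m+2)}$ using $\alpha>\beta/(\beta+m-2)$, and symmetrize via $p(t,x,y)=p(t,x,y)^{1/2}p(t,y,x)^{1/2}$. The only inessential deviations are your slightly sharper exponents for $c_2,c_3,c_7$ (the paper crudely uses $t^{-\alpha m/\beta}$) and the choice $b_0=t$, which should be taken strictly less than $t$ (e.g.\ $b_0=3t/4$ as in the paper) to comply with Hypothesis \ref{h.2}; neither affects the argument.
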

\begin{proof}
\textit{Step 1.} We apply Proposition \ref{Prop: Time dependent Lyapunov functions in case of polynomially growing diffusions} to verify that the operator $A$ satisfies Hypothesis \ref{h.1} with 
\[Z(x)=e^{\varepsilon_2 |x|_*^\beta}\]
and that $W_1$ and $W_2$ are time dependent Lyapunov functions for $L=\partial_t+A$.
Clearly, \eqref{Hp1 Prop: Time dependent Lyapunov functions in case of polynomially growing diffusions} holds true with $c_q=1$. Since $s>|m-2|$, we have $\beta>(2-m)\vee 0$. 
It remains to check \eqref{Hp2 Prop: Time dependent Lyapunov functions in case of polynomially growing diffusions}  and \eqref{Hp3 Prop: Time dependent Lyapunov functions in case of polynomially growing diffusions}. Let $|x|\geq 1$ and set 
$G_j=\sum_{i=1}^d D_i q_{ij} = m |x|^{m-2}x_j.$
Then
\begin{align*}
|x|^{1-\beta-m} \left(G\cdot \frac{x}{|x|}-\frac{V}{\varepsilon_j \beta |x|^{\beta -1}}\right)
=  |x|^{1-\beta-m} \left(m |x|^{m-1}- \frac{|x|^s}{\varepsilon_j \beta |x|^{\beta -1}}\right)
= m|x|^{-\beta} - \frac{1}{\varepsilon_j \beta}.
\end{align*}
If $|x|$ is large enough, for example $|x|\ge K$ with
$$K>\left( \frac{m}{\frac{1}{\varepsilon_j \beta}-1}\right)^\frac{1}{\beta}, $$
we get
\begin{align*}
|x|^{1-\beta-m} \left(G\cdot \frac{x}{|x|}-\frac{V}{\varepsilon_j \beta |x|^{\beta -1}}\right)
= m|x|^{-\beta} - \frac{1}{\varepsilon_j \beta}
\leq m K^{-\beta}- \frac{1}{\varepsilon_j \beta}
<   -1,
\end{align*}
where we have used that $\varepsilon_j <\frac{1}{\beta}$.
Hence, \eqref{Hp2 Prop: Time dependent Lyapunov functions in case of polynomially growing diffusions}  is satisfied if we choose $\Lambda :=1$.
Moreover, we have
$$\lim_{\abs{x}\to\infty} V(x) \abs{x}^{2-2\beta-m}=\lim_{\abs{x}\to\infty} \abs{x}^{2-2\beta-m+s}=1.$$ 
Consequently, \eqref{Hp3 Prop: Time dependent Lyapunov functions in case of polynomially growing diffusions} holds true for any $c<1$.

\textit{Step 2.} We now show that $A$ satisfies Hypothesis \ref{h.2}.
Fix $0<t\le 1,\,x\in \R^d$, $0<a_0<a<b<b_0<t$ and  $k>d+2$. Let $(t,y)\in [a_0,b_0]\times\R^d$.
We assume that $|y|\geq 1$; otherwise, in a neighborhood of the origin, all the quantities we are going to estimate are obviously bounded.
First, since $\varepsilon<\varepsilon_1$, we have that
$$w\le c_1w^{\frac{k-2}{k}}W_1^{\frac{2}{k}}$$
with $c_1=1$. Second, an easy computation shows that
\begin{align}\label{eq1: Kernel estimates in case of polynomially growing diffusions}
\frac{|Q(y)\nabla w(t,y)|}{w(t,y)^{\frac{k-1}{k}}W_1(t,y)^{\frac{1}{k}}}
&= \varepsilon \beta t^\alpha |y|^{\beta-1}(1+|y|^m) e^{-\frac{1}{k}(\varepsilon_1-\varepsilon)t^\alpha |y|^\beta}\notag\\
&\leq 2\varepsilon \beta t^\alpha |y|^{\beta+m-1} e^{-\frac{1}{k}(\varepsilon_1-\varepsilon)t^\alpha |y|^\beta}.
\end{align}
We make use of the following remark: since the function $t\mapsto t^p e^{-t}$ on $(0,\infty)$ attains its maximum at the point $t=p$, then for $\tau, \gamma, z>0$ we have
\begin{equation}\label{eq2: Kernel estimates in case of polynomially growing diffusions}
z^\gamma e^{-\tau z^\beta}
=\tau^{-\frac{\gamma}{\beta}}(\tau z^\beta)^\frac{\gamma}{\beta} e^{-\tau z^\beta}
\leq \tau^{-\frac{\gamma}{\beta}} \left( \frac{\gamma}{\beta}\right)^\frac{\gamma}{\beta} e^{-\frac{\gamma}{\beta}}
=: C(\gamma,\beta) \tau^{-\frac{\gamma}{\beta}}.
\end{equation}
Applying \eqref{eq2: Kernel estimates in case of polynomially growing diffusions} to the inequality \eqref{eq1: Kernel estimates in case of polynomially growing diffusions} with $z=|y|$, $\tau=\frac{1}{k}(\varepsilon_1-\varepsilon)t^\alpha$, $\beta=\beta$ and $\gamma=\beta+m-1>0$ yields
\begin{align*}
\frac{|Q(y)\nabla w(t,y)|}{w(t,y)^{\frac{k-1}{k}}W_1(t,y)^{\frac{1}{k}}}
\leq 2C(\beta+m-1, \beta)\varepsilon \beta t^\alpha \left[\frac{1}{k}(\varepsilon_1-\varepsilon)t^\alpha\right]^{-\frac{\beta+m-1}{\beta}} 
\leq \overline{c} t^{-\frac{\alpha (m-1)}{\beta}}
\leq \overline{c} t^{-\frac{\alpha m}{\beta}}.
\end{align*}
Thus, we choose $c_2=\overline{c} t^{-\frac{\alpha m}{\beta}}$, where $\overline{c}$ is a universal constant.
Similarly,
\begin{align*}
&\frac{|{\div}(Q(y)\nabla w(t,y))|}{w(t,y)^{\frac{k-2}{k}}W_1(t,y)^{\frac{2}{k}}}
\leq \frac{m |y|^{m-1} |\nabla w(t,y)| + (1+|y|^m)|\Delta w|}{w(t,y)^{\frac{k-2}{k}}W_1(t,y)^{\frac{2}{k}}}\\
&\leq \varepsilon \beta t^\alpha \left[ m  |y|^{\beta+m-2}  +2 ((\beta-2)^++d) |y|^{\beta+m-2}+2\varepsilon\beta t^{\alpha} |y|^{2\beta +m-2} \right] e^{-\frac{2}{k}(\varepsilon_1-\varepsilon)t^\alpha |y|^\beta}. 
\end{align*}
As a result, applying \eqref{eq2: Kernel estimates in case of polynomially growing diffusions} to each term, we find that
\begin{align*}
\frac{|{\div}(Q(y)\nabla w(t,y))|}{w(t,y)^{\frac{k-2}{k}}W_1(t,y)^{\frac{2}{k}}}
\leq  &C(\beta, m) \varepsilon \beta t^\alpha \left \lbrace  [m+2((\beta-2)^++d)]  \left[\frac{2}{k}(\varepsilon_1-\varepsilon)t^\alpha\right]^{-\frac{\beta+m-2}{\beta}}\right. \\
&\left. +2\varepsilon\beta t^{\alpha}\left[\frac{2}{k}(\varepsilon_1-\varepsilon)t^\alpha\right]^{-\frac{2\beta+m-2}{\beta}}\right\rbrace
 \leq  \overline{c} t^{-\frac{\alpha(m-2)}{\beta}}\leq \overline{c} t^{-\frac{\alpha m}{\beta}}.
\end{align*}
Therefore, we pick $c_3=\overline{c} t^{-\frac{\alpha m}{\beta}}$. In the same way, we have
\begin{align*}
\frac{|\partial_t w(t,y)|}{w(t,y)^{\frac{k-2}{k}}W_1(t,y)^{\frac{2}{k}}}
= \varepsilon\alpha t^{\alpha-1} |y|^\beta e^{-\frac{2}{k}(\varepsilon_1-\varepsilon)t^\alpha |y|^\beta}
\leq C(\beta) \varepsilon\alpha t^{\alpha-1}\left[\frac{2}{k}(\varepsilon_1-\varepsilon)t^\alpha\right]^{-1}
\leq \overline{c} t^{-1}.
\end{align*}
Then, we take $c_4= \overline{c} t^{-1}$.
Finally,
$$\frac{V(y)^{\frac{1}{2}}}{w(t,y)^{-\frac{1}{k}}W_2(t,y)^{\frac{1}{k}}}
=|y|^\frac{s}{2} e^{-\frac{1}{k}(\varepsilon_2-\varepsilon)t^\alpha |y|^\beta}
\leq C(s,\beta)\left[\frac{1}{k}(\varepsilon_2-\varepsilon)t^\alpha\right]^\frac{-s}{2\beta}
\leq \overline{c}t^{-\frac{\alpha s}{2\beta}},$$
so we set $c_5=\overline{c}t^{-\frac{\alpha s}{2\beta}}.$

\textit{Step 3.} We check the remaining hypotheses of Theorem \ref{theo-estimate} assuming as above that $|y|\geq 1$.
First, we have
\begin{align*}
\frac{|\Delta w(t,y)|}{w(t,y)^{\frac{k-2}{k}}W_1(t,y)^{\frac{2}{k}}}
= \varepsilon\beta t^\alpha \left[(\beta -2+d)  |y|^{\beta-2}  + \varepsilon\beta t^{\alpha} |y|^{2\beta-2}\right] e^{-\frac{2}{k}(\varepsilon_1-\varepsilon)t^\alpha |y|^\beta}.
\end{align*}
Recalling that $\abs{y}\geq 1$ and applying 
\eqref{eq2: Kernel estimates in case of polynomially growing diffusions}, yields 
\begin{align*}
\frac{|\Delta w(t,y)|}{w(t,y)^{\frac{k-2}{k}}W_1(t,y)^{\frac{2}{k}}}
&\leq  \varepsilon\beta t^\alpha \left[((\beta-2)^++d)  |y|^{\beta}  + \varepsilon\beta t^{\alpha} |y|^{2\beta}\right] e^{-\frac{2}{k}(\varepsilon_1-\varepsilon)t^\alpha |y|^\beta}\\
&\leq C(\beta)\varepsilon\beta t^\alpha \left\lbrace ((\beta-2)^++d)\left[\frac{2}{k}(\varepsilon_1-\varepsilon)t^\alpha\right]^{-1}+\varepsilon\beta t^{\alpha}\left[\frac{2}{k}(\varepsilon_1-\varepsilon)t^\alpha\right]^{-2} \right\rbrace
\leq  \overline{c}.
\end{align*}
Thus, \eqref{Further assumption1} is verified by taking $c_6=\overline{c}$. 
To choose the constant $c_7$ in \eqref{Further assumption2}, we let $t_0\in (0,t)$. Then, we get
\begin{align*}
\frac{|Q(y)\nabla W_1(t_0,y)|}{w(t,y)^{-1/k}W_1(t_0, y)W_2(t,y)^{1/k}}
&=\frac{\varepsilon_1\beta t_0^\alpha |y|^{\beta-1}(1+|y|^m)W_1(t_0,y)}{w(t,y)^{-1/k}W_1(t_0, y)W_2(t,y)^{1/k}}\\
&\leq 2\varepsilon_1\beta t^\alpha |y|^{\beta+m-1}e^{-\frac{1}{k}(\varepsilon_2-\varepsilon)t^\alpha |y|^\beta}\\
&\leq 2C(\beta,m)\varepsilon_1\beta t^\alpha\left[\frac{1}{k}(\varepsilon_2-\varepsilon)t^\alpha\right]^{-\frac{\beta+m-1}{\beta}}\\
&\leq \overline{c} t^{-\frac{\alpha(m-1)}{\beta}}
\leq \overline{c} t^{-\frac{\alpha m}{\beta}}.
\end{align*} 
Consequently, we set $c_7=\overline{c} t^{-\frac{\alpha m}{\beta}}$. Finally, we observe that \eqref{Further assumption3} is clearly satisfied.

To sum up, the constants $c_1, \dots, c_7$ are the following:
\begin{align*}
&c_1=1, 
&&c_2=c_3=c_7=\overline{c} t^{-\frac{\alpha m}{\beta}},
&&c_4= \overline{c} t^{-1},\\
&c_5=\overline{c}t^{-\frac{\alpha s}{2\beta}},
&&c_6=\overline{c}.
\end{align*}

\textit{Step 4.} We are now ready to apply Theorem \ref{theo-estimate}. Thus, there is a positive constant $C>0$ 
depending only on $d$ and $k$ such that
\begin{eqnarray}\label{eq3: Kernel estimates in case of polynomially growing diffusions}
w(t,y)p(t,x,y) &\le & C\left[c_1^{\frac{k}{2}}\sup_{s\in (a_0,b_0)}\xi_{W_1}(s,x)+\left(
c_2^k+\frac{c_1^{\frac{k}{2}}}{(b_0-b)^{\frac{k}{2}}}+c_3^{\frac{k}{2}}
+c_4^{\frac{k}{2}}+c_6^{\frac{k}{2}}\right)\int_{a_0}^{b_0}\xi_{W_1}(s,x)\,ds\right.\notag\\
& & \quad \quad \left. +(c_5^k+c_2^\frac{k}{2}c_7^\frac{k}{2})\int_{a_0}^{b_0}\xi_{W_2}(s,x)\,ds\right]
\end{eqnarray}
for all $(t,y)\in (a,b)\times \R^d$ and fixed $x\in \R^d$.
We set $a_0=\frac{t}{4},b=\frac{t}{2}$ and $b_0=\frac{3t}{4}$.
Moreover, by Proposition \ref{Prop: Time dependent Lyapunov functions in case of polynomially growing diffusions}, there are two constants $H_1$ and $H_2$ not depending on $a_0$ and $b_0$ such that $\xi_{W_j}(s,x)\leq H_j$ for all $(s,x)\in [0,1]\times\R^d$, so 
\begin{equation*}
\int_{a_0}^{b_0}\xi_{W_j}(s,x)\,ds
\leq H_j(b_0-a_0)= \frac{t}{2} H_j.
\end{equation*}
If we now replace in \eqref{eq3: Kernel estimates in case of polynomially growing diffusions} the values of the constants $c_1, \dots, c_7$ determined in Step 3, we use the previous inequality and we consider $C$ as a positive constant that can vary from line to line, we obtain
\begin{equation}\label{eq4: Kernel estimates in case of polynomially growing diffusions}
w(t,y)p(t,x,y)\leq C \left[ t^{1-\frac{\alpha m}{\beta}k}+ t^{1-\frac{k}{2}}+t^{1-\frac{\alpha s}{2\beta}k}\right].
\end{equation}
We note that, since $\alpha>\frac{\beta}{\beta+m-2}$, $s>|m-2|$ and $\beta=\frac{s-m+2}{2}$, it follows that
$$\frac{\alpha(m\vee \frac{s}{2})}{\beta}> \frac{m\vee \frac{s}{2}}{\beta+m-2}>\frac{s}{2(\beta+m-2)}= \frac{s}{s+m-2}>\frac{1}{2}.$$
Hence,
$$t^{1-\frac{k}{2}}<t^{1-\frac{\alpha(m\vee \frac{s}{2})}{\beta}k}.$$
Consequently, by \eqref{eq4: Kernel estimates in case of polynomially growing diffusions}, we find that
$$w(t,y)p(t,x,y)\leq C t^{1-\frac{\alpha(m\vee \frac{s}{2})k}{\beta}}= C t^{1-\frac{\alpha(2m\vee s)k}{s-m+2}}.$$
Writing the expression of the weight function $w$ we gain the following inequality:
\begin{equation}\label{eq5: Kernel estimates in case of polynomially growing diffusions}
p(t,x,y)\le C t^{1-\frac{\alpha(2m\vee s)}{s-m+2}k}e^{-\varepsilon t^\alpha |y|_*^{\frac{s-m+2}{2}}}
\end{equation}
for $k>d+2$ and for any $t\in (0,1)$, $x,y\in \R^d$.

\textit{Step 5.} Since $A^*=A$, we have that $p^*(t,x,y)=p(t,y,x)$. Therefore, applying \eqref{eq5: Kernel estimates in case of polynomially growing diffusions} to $p^*(t,y,x)$, we derive that
\begin{equation*}
p(t,x,y)= p^*(t,y,x)\le C t^{1-\frac{\alpha(2m\vee s)}{s-m+2}k}e^{-\varepsilon t^\alpha |x|_*^{\frac{s-m+2}{2}}} 
\end{equation*}
for all $t\in (0,1)$ and $x,y\in \R^d$. Combining this with \eqref{eq5: Kernel estimates in case of polynomially growing diffusions} yields
$$p(t,x,y)=p(t,x,y)^{1/2}p(t,x,y)^{1/2} \leq C t^{1-\frac{\alpha(2m\vee s)}{s-m+2}k}e^{-\frac{\varepsilon}{2} t^\alpha |x|_*^{\frac{s-m+2}{2}}} e^{-\frac{\varepsilon}{2} t^\alpha |y|_*^{\frac{s-m+2}{2}}}$$
for $k>d+2$ and for any $t\in (0,1)$, $x,y\in \R^d$.
\end{proof}

\subsection{Exponentially growing coefficients}\label{S52}

\smallskip\noindent
In this subsection we apply Theorem \ref{theo-estimate} to the case of operators with exponentially diffusion and potential terms.

Let $Q(x)=e^{\abs{x}^m}I$ and $V(x)=e^{\abs{x}^s}$ with $2\leq m<s$. Set
$$w(t,x)=\exp\left( \varepsilon t^\alpha \int_0^{\abs{x}_*} e^{\frac{\tau^\beta}{2}}\, d\tau\right)\hbox{\ and } W_j(t,x)=\exp\left( \varepsilon_j t^\alpha \int_0^{\abs{x}_*} e^{\frac{\tau^\beta}{2}}\, d\tau\right),$$
where $j=1,2$, $\frac{m}{2}+1\leq\beta\leq m$, $0<\varepsilon<\varepsilon_1<\varepsilon_2$ and $\alpha>\frac{2\beta+m-2}{2m}$.

\begin{theo}\label{thm:5-2}
Let $p$ be the integral kernel associated with the operator $A$ with $Q(x)=e^{\abs{x}^m}I$ and $V(x)=e^{\abs{x}^s}$, where $2\leq m<s$. Then
$$p(t,x,y)\leq C t^{1-\frac{k}{2}} \exp({Ckt^{-\alpha}})\exp\left(- \frac{\varepsilon}{2} t^\alpha \int_0^{\abs{x}_*} e^{\frac{\tau^\beta}{2}}\, d\tau\right)\exp\left(- \frac{\varepsilon}{2} t^\alpha \int_0^{\abs{y}_*} e^{\frac{\tau^\beta}{2}}\, d\tau\right),$$
for $k>d+2$ and any $t\in (0,1),\,x,y\in \R^d$.
\end{theo}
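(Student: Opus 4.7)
The plan is to mirror the proof of Theorem \ref{Thm: Kernel estimates in case of polynomially growing diffusions}, using Proposition \ref{Prop: Time dependent Lyapunov functions in case of exponentially growing diffusions} in place of Proposition \ref{Prop: Time dependent Lyapunov functions in case of polynomially growing diffusions}. Write $F(r):=\int_0^r e^{\tau^\beta/2}\,d\tau$. First, check that $A$ satisfies Hypothesis \ref{h.1} with $Z(x)=\exp(\varepsilon_2 F(|x|_*))$ and that both $W_1$ and $W_2$ are time-dependent Lyapunov functions for $L$. Condition \eqref{Hp1 Prop: Time dependent Lyapunov functions in case of exponentially growing diffusions} holds with $c_e=1$. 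For $|x|\ge 1$ one has $G(x)\cdot\frac{x}{|x|}=m|x|^{m-1}e^{|x|^m}$, so the left-hand side of \eqref{Hp2 Prop: Time dependent Lyapunov functions in case of exponentially growing diffusions} equals
\[
m|x|^{-\beta}e^{-|x|^\beta/2}-\tfrac{1}{\varepsilon_j}|x|^{1-\beta-m}\exp\bigl(|x|^s-|x|^\beta-|x|^m\bigr),
\]
which tends to $-\infty$ since $s>m\ge\beta$; analogously, $V(x)|x|^{1-\beta-m}e^{-|x|^\beta-|x|^m}=|x|^{1-\beta-m}\exp(|x|^s-|x|^\beta-|x|^m)\to\infty$ gives \eqref{Hp3 Prop: Time dependent Lyapunov functions in case of exponentially growing diffusions}. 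Proposition \ref{Prop: Time dependent Lyapunov functions are integrable with respect to pdy} then yields uniform bounds $\xi_{W_j}(s,x)\le C_j$ on $[0,1]\times\R^d$ for $j=1,2$.

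Next, verify Hypothesis \ref{h.2} together with conditions \eqref{Further assumption1}--\eqref{Further assumption3}. Direct differentiation gives $|Q\nabla w|=\varepsilon t^\alpha e^{|x|^\beta/2+|x|^m}w$, $|\partial_t w|=\varepsilon\alpha t^{\alpha-1}F(|x|)w$, with analogous explicit formulas for ${\div}(Q\nabla w)$, $\Delta w$, and $V^{1/2}=e^{|x|^s/2}$. Because $\varepsilon<\varepsilon_1<\varepsilon_2$, division by $w^{(k-2)/k}W_1^{2/k}$ (or $w^{(k-1)/k}W_1^{1/k}$, or the analogues with $W_2$) introduces the factor $\exp(-\kappa t^\alpha F(|x|))$ with $\kappa:=(\varepsilon_1-\varepsilon)/k$ (resp.\ $(\varepsilon_2-\varepsilon)/k$). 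This immediately yields $c_1=1$ and $c_6\le C$, and the elementary inequality $ue^{-u}\le 1/e$ gives $c_4\le Ct^{-1}$. For $c_2,c_3,c_5,c_7$, balance the growth $e^{|x|^\beta/2+|x|^m}$ (respectively $e^{|x|^s/2}$ for $c_5$) against the decay from $F$; using $F(r)\sim\frac{2}{\beta}r^{1-\beta}e^{r^\beta/2}$, the supremum is located at a critical radius $|x|_*^\beta\sim 2\alpha|\log t|$, producing bounds of order $\exp(C|\log t|^{m/\beta})\le C\exp(Ct^{-\alpha})$ uniformly on $(0,1)$. Finally, \eqref{Further assumption3} holds with $\sigma=1-(3/4)^\alpha\in(0,1)$, because on $[a_0,b_0]=[t/4,3t/4]$ one has $t^\alpha\le(3/4)^\alpha=1-\sigma$.

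Now apply Theorem \ref{theo-estimate} with $a_0=t/4$, $b=t/2$, $b_0=3t/4$. The factor $(b_0-b)^{-k/2}=(t/4)^{-k/2}$ multiplied by the interval length $\sim t$ produces the prefactor $t^{1-k/2}$; each $c_i^k$ or $c_i^{k/2}$ is absorbed into $\exp(Ckt^{-\alpha})$; and the $\xi_{W_j}$ integrals are controlled by the bounds from the first step. Dividing through by $w(t,y)=\exp(\varepsilon t^\alpha F(|y|))$ gives
\[
p(t,x,y)\le Ct^{1-k/2}\exp(Ckt^{-\alpha})\exp\bigl(-\varepsilon t^\alpha F(|y|)\bigr).
\]
Since $A^*=A$, we have $p(t,x,y)=p(t,y,x)$, so the same bound is valid with $|x|$ in place of $|y|$. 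Writing $p=p^{1/2}p^{1/2}$ and combining the two estimates yields the claimed symmetric inequality.

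The main obstacle is the optimization in the second step. In contrast with the polynomial case, where the ratio between power growth and exponential decay produces clean $t^{-\gamma\alpha}$ bounds via the elementary inequality \eqref{eq2: Kernel estimates in case of polynomially growing diffusions}, here $e^{|x|^m}$ must be tamed by the doubly exponential weight $\exp(\kappa t^\alpha F(|x|))\approx\exp(\frac{2\kappa t^\alpha}{\beta|x|^{\beta-1}}e^{|x|^\beta/2})$, and the saddle point slides to the logarithmic scale $|x|_*^\beta\sim 2\alpha|\log t|$. One must then recognize that the resulting sub-polynomial bound $\exp(C|\log t|^{m/\beta})$ is dominated by $\exp(Ct^{-\alpha})$ on $(0,1)$, which is precisely what furnishes the characteristic factor $\exp(Ckt^{-\alpha})$ in the final kernel estimate.
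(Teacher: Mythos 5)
Your proposal is correct and follows essentially the same route as the paper's proof: the same Lyapunov functions $W_1,W_2$ and weight $w$, verification of Hypothesis \ref{h.2} and of \eqref{Further assumption1}--\eqref{Further assumption3}, application of Theorem \ref{theo-estimate} with $a_0=\frac{t}{4}$, $b=\frac{t}{2}$, $b_0=\frac{3t}{4}$, and symmetrization via $p=p^*$. The only methodological difference lies in how the suprema defining $c_2,c_3,c_5,c_7$ are bounded: the paper sidesteps your saddle-point analysis at $|x|^\beta\sim 2\alpha|\log t|$ by the elementary device of estimating $\frac{r^\beta}{2}+r^m\le \overline{c}\,e^{(r-1)^\beta/4}$, substituting $z=e^{(r-1)^\beta/2}$ and maximizing $z\mapsto \overline{c}\sqrt{z}-\tilde{\varepsilon}t^\alpha z$, which gives $\exp(Ct^{-\alpha})$ directly and rigorously, whereas your sharper bound $\exp(C|\log t|^{m/\beta})$ would require justifying the location of the maximizer. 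One small imprecision: $c_6$ is not a constant, since the term $\varepsilon^2 t^{2\alpha}e^{|y|^\beta}$ in $\Delta w$ still forces a bound of the form $c_6\le \overline{c}\,t^{\alpha}\exp(\overline{c}\,t^{-\alpha})$ (the paper takes $c_6=c_2$); this is harmless, as $c_6^{k/2}$ is absorbed into $\exp(Ckt^{-\alpha})$ in the final estimate anyway.
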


\begin{proof}
\textit{Step 1.} We check conditions \eqref{Hp1 Prop: Time dependent Lyapunov functions in case of exponentially growing diffusions}, \eqref{Hp2 Prop: Time dependent Lyapunov functions in case of exponentially growing diffusions} and \eqref{Hp3 Prop: Time dependent Lyapunov functions in case of exponentially growing diffusions} to apply Proposition \ref{Prop: Time dependent Lyapunov functions in case of exponentially growing diffusions} and show that $W_1$ and $W_2$ are time dependent Lyapunov functions for $L=\partial_t+A$. It's clear that \eqref{Hp1 Prop: Time dependent Lyapunov functions in case of exponentially growing diffusions} holds true with $c_e=1$. Moreover, since $s>m$, it follows that
$$\lim_{\abs{x}\to\infty} V(x) \abs{x}^{1-\beta-m} e^{-\abs{x}^\beta-\abs{x}^m}=\lim_{\abs{x}\to\infty} \abs{x}^{1-\beta-m} e^{\abs{x}^s-\abs{x}^\beta-\abs{x}^m}=+\infty$$
and
\begin{align*}
&\limsup_{\abs{x}\to\infty}|x|^{1-\beta-m}e^{-\frac{\abs{x}^\beta}{2}-\abs{x}^m}\left(G\cdot \frac{x}{|x|}-\frac{V}{\varepsilon e^{\frac{\abs{x}^\beta}{2}}}\right)\\
&= \limsup_{\abs{x}\to\infty} \left( m\abs{x}^{-\beta} e^{-\frac{\abs{x}^\beta}{2}} -\frac{1}{\varepsilon}|x|^{1-\beta-m} e^{\abs{x}^s-\abs{x}^\beta-\abs{x}^m}\right)=-\infty.
\end{align*}
Consequently, there exist constants $c, \Lambda>0$ such that \eqref{Hp2 Prop: Time dependent Lyapunov functions in case of exponentially growing diffusions} and \eqref{Hp3 Prop: Time dependent Lyapunov functions in case of exponentially growing diffusions} hold true.
By Proposition \ref{Prop: Time dependent Lyapunov functions in case of exponentially growing diffusions} we conclude that $W_1$ and $W_2$ are time dependent Lyapunov functions. In addition, we also note that Hypothesis \ref{h.1} is verified with
$$Z(x)=\exp\left( \varepsilon_2 \int_0^{\abs{x}_*} e^{\frac{\tau^\beta}{2}}\, d\tau\right).$$

\textit{Step 2.} We prove that $A$ satisfies all the assumptions of Theorem \ref{theo-estimate}. 
Fix $0<t\le 1,\,x\in \R^d$, $0<a_0<a<b<b_0<t$ and  $k>d+2$. Let $(t,y)\in [a_0,b_0]\times\R^d$.
If $|y|\leq 1$, by continuity all the functions we are estimating are bounded by a constant.
Thus, let $|y|\geq 1$.
Since $\varepsilon<\varepsilon_1$, we have that $w\leq W_1$. Hence, inequality
$$w\le c_1w^{\frac{k-2}{k}}W_1^{\frac{2}{k}}$$
holds true with $c_1=1$. After, observing that
\begin{equation}\label{eq1: Kernel estimates in case of exponentially growing diffusions}
\int_0^{\abs{y}} e^{\frac{\tau^\beta}{2}}\, d\tau\geq \int_{\abs{y}-1}^{\abs{y}} e^{\frac{\tau^\beta}{2}}\, d\tau\geq e^{\frac{(\abs{y}-1)^\beta}{2}}
\end{equation}
leads to
\begin{align}\label{eq2: Kernel estimates in case of exponentially growing diffusions}
\frac{|Q(y)\nabla w(t,y)|}{w(t,y)^{\frac{k-1}{k}}W_1(t,y)^{\frac{1}{k}}}
&= \varepsilon t^\alpha \exp\left( \frac{\abs{y}^\beta}{2}+\abs{y}^m-\frac{(\varepsilon_1-\varepsilon)}{k}t^\alpha \int_0^{\abs{y}} e^{\frac{\tau^\beta}{2}}\, d\tau\right)\notag\\
& \leq \varepsilon t^\alpha \exp\left( \frac{\abs{y}^\beta}{2}+\abs{y}^m-\frac{(\varepsilon_1-\varepsilon)}{k}t^\alpha e^{\frac{(\abs{y}-1)^\beta}{2}}\right).
\end{align}
We now consider the function
$$f(r):=\frac{r^\beta}{2}+r^m-\tilde{\varepsilon} t^\alpha e^\frac{(r-1)^\beta}{2},$$
where $r\geq 1$ and $\tilde{\varepsilon}:=(\varepsilon_1-\varepsilon)/{k}$.
Considering that there exists a universal constant $\overline{c}>0$ (that can vary from line to line) depending on $\beta$ and $m$ such that
$$\frac{r^\beta}{2}+r^m\leq \overline{c} e^\frac{(r-1)^\beta}{4}, \quad \forall r\geq 1,$$
we get
$$f(r)\leq \overline{c} e^\frac{(r-1)^\beta}{4}-\tilde{\varepsilon} t^\alpha e^\frac{(r-1)^\beta}{2}.$$
If we set $z=e^\frac{(r-1)^\beta}{2}$ and we compute the maximum of the function $h(z)= \overline{c} \sqrt{z}-\tilde{\varepsilon} t^\alpha z$, we obtain that
$$f(r)\leq \frac{\overline{c}^2}{4\tilde{\varepsilon}}t^{-\alpha}.$$
As a result, by \eqref{eq2: Kernel estimates in case of exponentially growing diffusions} we derive
$$\frac{|Q(y)\nabla w(t,y)|}{w(t,y)^{\frac{k-1}{k}}W_1(t,y)^{\frac{1}{k}}}\leq \varepsilon t^\alpha \exp\left(\frac{\overline{c}^2 }{4\tilde{\varepsilon}}t^{-\alpha}\right).$$
Then, we set $c_2:= \overline{c} t^\alpha \exp(\overline{c}t^{-\alpha})$.
In a similar way, we have that
\begin{align*}
\frac{|{\div}(Q(y)\nabla w(t,y))|}{w(t,y)^{\frac{k-2}{k}}W_1(t,y)^{\frac{2}{k}}}
\leq \Bigg[&(d-1)\varepsilon t^\alpha \frac{1}{\abs{y}} e^{\frac{\abs{y}^\beta}{2}+\abs{y}^m} +m\varepsilon t^\alpha \abs{y}^{m-1} e^{\frac{\abs{y}^\beta}{2}+\abs{y}^m} + \frac{\beta}{2}\varepsilon t^\alpha \abs{y}^{\beta-1} e^{\frac{\abs{y}^\beta}{2}+\abs{y}^m}\\
& +\varepsilon^2 t^{2\alpha}e^{\abs{y}^\beta+\abs{y}^m}\Big] \exp\left(-\frac{2(\varepsilon_1-\varepsilon)}{k}t^\alpha \int_0^{\abs{y}} e^{\frac{\tau^\beta}{2}}\, d\tau\right).
\end{align*}
Using again \eqref{eq1: Kernel estimates in case of exponentially growing diffusions}, we deduce
\begin{align*}
\frac{|{\div}(Q(y)\nabla w(t,y))|}{w(t,y)^{\frac{k-2}{k}}W_1(t,y)^{\frac{2}{k}}}
\leq &(d-1)\varepsilon t^\alpha \exp\left( \frac{\abs{y}^\beta}{2}+\abs{y}^m -\frac{2(\varepsilon_1-\varepsilon)}{k}t^\alpha e^{\frac{(\abs{y}-1)^\beta}{2}} \right)\\
&+m\varepsilon t^\alpha \exp\left( \log\abs{y}^{m-1}+ \frac{\abs{y}^\beta}{2}+\abs{y}^m -\frac{2(\varepsilon_1-\varepsilon)}{k}t^\alpha e^{\frac{(\abs{y}-1)^\beta}{2}}\right)\\
&+ \frac{\beta}{2}\varepsilon t^\alpha \exp\left( \log\abs{y}^{\beta-1}+ \frac{\abs{y}^\beta}{2}+\abs{y}^m -\frac{2(\varepsilon_1-\varepsilon)}{k}t^\alpha e^{\frac{(\abs{y}-1)^\beta}{2}}\right)\\
& +\varepsilon^2 t^{2\alpha} \exp\left( \abs{y}^\beta+\abs{y}^m-\frac{2(\varepsilon_1-\varepsilon)}{k}t^\alpha e^{\frac{(\abs{y}-1)^\beta}{2}}\right).
\end{align*}
Proceeding as above yields
\begin{align*}
\frac{|{\div}(Q(y)\nabla w(t,y))|}{w(t,y)^{\frac{k-2}{k}}W_1(t,y)^{\frac{2}{k}}}
&\leq \left( (d-1) +m+\frac{\beta}{2}\right)\varepsilon t^\alpha  \exp\left(\frac{\overline{c}^2}{8\tilde{\varepsilon}}t^{-\alpha}\right)+ \varepsilon^2 t^{2\alpha}\exp\left(\frac{\overline{c}^2}{8\tilde{\varepsilon}}t^{-\alpha}\right)\\
&\leq \overline{c} \varepsilon t^\alpha \exp\left(\frac{\overline{c}^2}{8\tilde{\varepsilon}}t^{-\alpha}\right).
\end{align*}
Thus, we choose $c_3=\overline{c} t^\alpha\exp(\overline{c} t^{-\alpha})$.
Concerning $c_4$, we have
\begin{align*}
\frac{|\partial_t w(t,y)|}{w(t,y)^{\frac{k-2}{k}}W_1(t,y)^{\frac{2}{k}}}
&= \varepsilon \alpha t^{\alpha-1}\left(\int_0^{\abs{y}} e^{\frac{\tau^\beta}{2}}\, d\tau\right)\exp\left(-\frac{2(\varepsilon_1-\varepsilon)}{k}t^\alpha \int_0^{\abs{y}} e^{\frac{\tau^\beta}{2}}\, d\tau\right)\\
&\leq \varepsilon \alpha \frac{k}{2(\varepsilon_1-\varepsilon)} t^{-1}.
\end{align*}
We take $c_4=\overline{c} t^{-1}$. Repeating the same procedure for the remaining estimates, we get
$c_5=\overline{c} e^{\overline{c}t^{-\alpha}}$ and $c_6=c_7=c_2.$

\textit{Step 3.} As in Theorem \ref{Thm: Kernel estimates in case of polynomially growing diffusions}, we choose $a_0=\frac{t}{4},b=\frac{t}{2}$ and $b_0=\frac{3t}{4}$ and we notice that, by Proposition \ref{Prop: Time dependent Lyapunov functions in case of exponentially growing diffusions}, there are two constants $H_1$ and $H_2$ not depending on $a_0$ and $b_0$ such that
\begin{equation*}
\int_{a_0}^{b_0}\xi_{W_j}(s,x)\,ds
\leq H_j(b_0-a_0)= \frac{t}{2} H_j.
\end{equation*}
Applying Theorem \ref{theo-estimate}, we infer that there exists a positive constant $C>0$ depending only on $d$ and $k$ such that
\begin{eqnarray*}
w(t,y)p(t,x,y) &\le& C\left[c_1^{\frac{k}{2}}\sup_{s\in (a_0,b_0)}\xi_{W_1}(s,x)+\left(
c_2^k+\frac{c_1^{\frac{k}{2}}}{(b_0-b)^{\frac{k}{2}}}+c_3^{\frac{k}{2}}
+c_4^{\frac{k}{2}}+c_6^{\frac{k}{2}}\right)\int_{a_0}^{b_0}\xi_{W_1}(s,x)\,ds\right.\notag\\
& & \quad \quad \left. +(c_5^k+c_2^\frac{k}{2}c_7^\frac{k}{2})\int_{a_0}^{b_0}\xi_{W_2}(s,x)\,ds\right]
\end{eqnarray*}
for all $(t,y)\in (a,b)\times \R^d$ and fixed $x\in \R^d$.
We rewrite the previous inequality taking into account the values of the constants $c_1, \dots, c_7$ found in Step 2, keeping track only of powers of $t$ and absorbing all other constants into the constant $C$:
\begin{align*}
w(t,y)p(t,x,y)
&\leq C\Big[ t^{1+\alpha t} \exp({\overline{c}kt^{-\alpha}}) +t^{1+\frac{\alpha k}{2}} \exp\left(\frac{\overline{c}k}{2}t^{-\alpha}\right) +t^{1-\frac{k}{2}} +t \exp({\overline{c}kt^{-\alpha}})\Big]\\
&\leq C t^{1-\frac{k}{2}} \exp({Ckt^{-\alpha}}).
\end{align*}
Hence,
\begin{equation}\label{eq3: Kernel estimates in case of exponentially growing diffusions}
p(t,x,y)\leq C t^{1-\frac{k}{2}} \exp({Ckt^{-\alpha}})\exp\left(- \varepsilon t^\alpha \int_0^{\abs{y}_*} e^{\frac{\tau^\beta}{2}}\, d\tau\right)
\end{equation}
for $k>d+2$ and for any $t\in (0,1)$, $x,y\in \R^d$, where $C$ depends only on $d, \eta, \beta$ and $m$.

\textit{Step 4.} We conclude the proof by applying inequality \eqref{eq3: Kernel estimates in case of exponentially growing diffusions} to $p^*(t,y,x)$. This is possible because $A^*=A$, so $p^*(t,y,x)=p(t,x,y)$. Then we obtain
$$p^*(t,y,x)\leq C t^{1-\frac{k}{2}} \exp({Ckt^{-\alpha}})\exp\left(- \varepsilon t^\alpha \int_0^{\abs{x}_*} e^{\frac{\tau^\beta}{2}}\, d\tau\right)$$
for all $t\in (0,1)$ and $x,y\in \R^d$. As a consequence, we get the desired inequality as follows:
\begin{align*}
p(t,x,y)&=p(t,x,y)^\frac{1}{2}p^*(t,y,x)^\frac{1}{2}\\
&\leq C t^{1-\frac{k}{2}} \exp({Ckt^{-\alpha}})\exp\left(- \frac{\varepsilon}{2} t^\alpha \int_0^{\abs{x}_*} e^{\frac{\tau^\beta}{2}}\, d\tau\right)\exp\left(- \frac{\varepsilon}{2} t^\alpha \int_0^{\abs{y}_*} e^{\frac{\tau^\beta}{2}}\, d\tau\right),
\end{align*}
for all $t\in (0,1)$ and $x,y\in \R^d$.

\end{proof}

\subsection{Spectral properties and eigenfunctions estimates}\label{S53}

\smallskip\noindent
In this subsection we study some spectral properties of $A_{\min}$ with either polynomial or exponential coefficients. In particular we prove the following result.
\begin{theo}
If $Q(x) = (1+|x|_*^m)I$ and $V(x)=|x|^s$ with $s>|m-2|$ and $m>0$ or $Q(x)=e^{\abs{x}^m}I$ and $V(x)=e^{\abs{x}^s}$, where $2\leq m<s$, then $T_p(t)$ is compact for all $t>0$ and $p\in (1,\infty)$. Moreover the spectrum of the generator of $T_p(\cdot)$ is independent of $p$ for $p\in (1,\infty)$ and consists of a sequence of negative real eigenvalues which accumulates at $-\infty$.
\end{theo}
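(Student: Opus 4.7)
The plan is to prove compactness of $T_2(t)$ on $L^2(\R^d)$ first, then transfer both compactness and the spectral structure to $L^p(\R^d)$ for $p\in(1,\infty)$ using the ultracontractivity and consistency established in Section~\ref{S2}.

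First I would use the kernel estimates from Theorem~\ref{Thm: Kernel estimates in case of polynomially growing diffusions} and Theorem~\ref{thm:5-2} to show that $p(t,\cdot,\cdot)\in L^2(\R^d\times\R^d)$ for every $t>0$. In the polynomial case this is immediate because
\[
p(t,x,y)^2 \le C^2 t^{2-\frac{2\alpha(2m\vee s)k}{s-m+2}} e^{-\varepsilon t^\alpha |x|_*^{(s-m+2)/2}}e^{-\varepsilon t^\alpha |y|_*^{(s-m+2)/2}},
\]
which is integrable on $\R^d\times\R^d$ since $(s-m+2)/2>0$; the exponential case is analogous with faster decay. Hence $T_2(t)$ is Hilbert--Schmidt on $L^2(\R^d)$ and in particular compact for every $t>0$. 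Because $A_{\min}$ is self-adjoint (Theorem~\ref{Thm: minimal realization}), the spectrum $\sigma(A_{\min})$ is real and, since $T_2(t)$ is compact, $A_{\min}$ has compact resolvent, so $\sigma(A_{\min})$ is a discrete sequence of eigenvalues with finite multiplicities. Sub-Markovianity of $T_2(\cdot)$ yields $\|T_2(t)\|\le 1$, so $\sigma(A_{\min})\subset(-\infty,0]$; moreover $0$ is not an eigenvalue because a constant multiple of a nontrivial $L^2$-eigenfunction would contradict $\eta\||\nabla u|\|_2^2\le -\langle A_{\min}u,u\rangle=0$ together with the strict positivity of the potential contribution. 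Compactness of $T_2(t)$ forces the eigenvalues of $T_2(t)$ to accumulate only at $0$, which translates to $\lambda_n\to -\infty$ for the eigenvalues of $A_{\min}$.

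Next I would transfer compactness to $L^p(\R^d)$ for $p\in(1,\infty)$. The key tool is ultracontractivity combined with the existence of a symmetric integral kernel: I would factorize $T_p(t)=T_p(t/3)\,T_2(t/3)\,T_p(t/3)$, which makes sense by consistency. For $p\ge 2$, the first factor is bounded $L^p\to L^p$, the ultracontractive bound $\|T_2(t/3)\|_{\calL(L^2,L^\infty)}\le c\,t^{-d/4}$ gives $T_p(t/3):L^p\to L^2$ bounded (via duality and interpolation with $L^\infty$-contractivity), and by symmetry $T_p(t/3):L^2\to L^p$ is bounded; with $T_2(t/3)$ compact on $L^2$, the composition is compact. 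The case $p\in(1,2)$ follows by duality from the self-adjointness of $A_{\min}$.

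Finally, the $p$-independence of the spectrum is a standard consequence of ultracontractivity and consistency of symmetric semigroups: by Davies' theorem (see, e.g., Theorem 1.6.4 in \cite{Davies}), the spectrum of the generator of an ultracontractive symmetric $C_0$-semigroup that extrapolates consistently to $L^p(\R^d)$ for all $p\in[1,\infty)$ is independent of $p\in(1,\infty)$. Together with Step~1 this yields that $\sigma(-A_{\min,p})$ coincides with the discrete sequence of positive real eigenvalues of $-A_{\min}$ accumulating at $+\infty$, which is the claim. The main technical point I would expect to be delicate is the transfer of compactness to $L^p$: one has to justify that the factorization above remains valid on the correct domains and that the uniform integrability in the kernel estimates survives the exponential blow-up factor $\exp(Ckt^{-\alpha})$ appearing in the exponentially growing case. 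This is guaranteed by the $y$-decay part of the kernel estimate, which provides integrability of $p(t,x,\cdot)$ in every $L^q$ uniformly in $x$, and dominates the $t$-singular prefactor.
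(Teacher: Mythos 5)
Your route is genuinely different from the paper's, and in its first half it is arguably cleaner. The paper does not use the kernel estimates at all to get compactness: it compares $0\le T_2(t)\le S(t)$ with the semigroup $S(\cdot)$ generated by ${\div}(Q\nabla)$ without potential, obtains compactness of $S(t)$ from \cite[Corollary 1.6.7]{Davies}, and concludes via the Aliprantis--Burkinshaw domination theorem \cite{AB06}; the range $0<m\le 2$ in the polynomial case then needs a separate perturbation argument from \cite{Luca-Abde}, because Davies' corollary does not cover it. Your Hilbert--Schmidt argument from Theorems \ref{Thm: Kernel estimates in case of polynomially growing diffusions} and \ref{thm:5-2} treats all parameter ranges uniformly and avoids that case distinction (just note that the estimates are stated only for $t\in(0,1)$, so for $t\ge 1$ you must invoke the semigroup law). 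Your exclusion of $0$ from the point spectrum via $\eta\||\nabla u|\|_2^2\le -\langle A_{\min}u,u\rangle$ is fine. After compactness of $T_2(t)$ is known, the paper gets everything on $L^p$, $1<p<\infty$ --- compactness, $p$-independence of the spectrum, and the eigenvalue structure --- in one stroke from \cite[Theorem 1.6.3]{Davies}, which is precisely the statement that for a symmetric sub-Markovian semigroup, compactness of $T_2(t_0)$ for one $t_0>0$ yields compactness of $T_p(t)$ for all $t>0$ and $p$-independence of the spectrum.

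The one step of yours that does not survive scrutiny as written is the claim that ultracontractivity plus duality and interpolation make $T_p(t/3):L^p\to L^2$ bounded for $p>2$. On the infinite-measure space $\R^d$, interpolating the bound $\|T(t)\|_{\calL(L^1,L^2)}\le ct^{-d/4}$ (the dual of the $L^2\to L^\infty$ bound) against $L^\infty$-contractivity only yields $T(t):L^p\to L^q$ for $q\ge p$; it never produces a bounded map $L^p\to L^2$ with $p>2$ (the heat semigroup itself is a counterexample). So the factorization $T_p(t)=T_p(t/3)T_2(t/3)T_p(t/3)$ cannot be justified by ultracontractivity alone. You do point to the correct repair in your last sentence --- the kernel bound $p(t,x,y)\le g(t)\phi_t(x)\phi_t(y)$ with $\phi_t\in\bigcap_{q}L^q(\R^d)$ --- but once you use that, the factorization is superfluous: the bound dominates $T_p(t)$ by a positive rank-one operator on $L^p$, and compactness on $L^p$ follows from Aliprantis--Burkinshaw applied to $T_p(t)=T_p(t/3)^3$, or one simply cites \cite[Theorem 1.6.3]{Davies} as the paper does, which also settles the $p$-independence you attribute to a neighbouring theorem. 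Either repair closes the gap.
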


\begin{proof}
 By \cite[Theorem 1.6.3]{Davies}, it suffices to prove that  $T_2(t)$ is compact for all $t>0$.
 To this purpose let us assume that $Q(x)=(1+|x|^m_*)I$ and $V(x)=|x|^s$ with $s>m-2$ and $m>2$ or $Q(x)=e^{\abs{x}^m}I$ and $V(x)=e^{\abs{x}^s}$, where $2\leq m<s$. Applying \cite[Corollary 1.6.7]{Davies}, one deduces that the $L^2$-realization $A_0$ of $\calA_0:={\div}(Q\nabla )$ has compact resolvent and thus the semigroup $S(t)$ generated by $A_0$ in $L^2(\R^d)$ is compact for all $t>0$, cf. \cite[Theorem 4.29]{EN00}. Since $V\ge 0$ we have $0\le T_2(t)\le S(t)$ for all $t\ge 0$. Applying the Aliprantis-Burkinshaw theorem \cite[Theorem 5.15]{AB06} we obtain the compactness of $T_2(t)$ for all $t>0$.
 
Let us now show the compactness of $T_2(t)$ in the case where $Q(x)=(1+|x|^m_*)I$ and $V(x)=|x|^s$ with $s>|m-2|$ and $0<m\le 2$. The operator $A_{\min}$ can be considered as the sum of the operator
$\widetilde{A}_2u:=(1+|x|^m_*)\Delta u-|x|^su$ and the operator $Bu:=\nabla (1+|x|^m_*)\cdot \nabla u$. From \cite[Proposition 2.3]{Luca-Abde} we know that $B$ is a small perturbation of $\widetilde{A}_2$. Hence, $R(\lambda ,A_{\min})=R(\lambda , \widetilde{A}_2)(I-BR(\lambda ,\widetilde{A}_2))^{-1}$ for all $\lambda \in \rho(\widetilde{A}_2)$. Moreover, by \cite[Proposition 2.10]{Luca-Abde}, we know that $\widetilde{A}_2$ has compact resolvent and hence $A_{\min}$ has compact resolvent too. Since $T_2(\cdot)$ is an analytic semigroup, we deduce that $T_2(t)$ is compact for all $t>0$.
\end{proof}
  
Let us now estimate the eigenfunctions of $A_{\min}$. To this purpose let us note first that, by the semigroup law and the symmetry of $p(t,\cdot ,\cdot)$ for any $t>0$, we have
$$p(t+s,x,y)=\int_{\R^d}p(t,x,z)p(s,y,z)\,dz,\quad t,s>0,\,x,y\in \R^d.$$ Thus,
$$
p(t,x,x)=\int_{\R^d}p\left(\frac{t}{2},x,y\right)^2\,dy,\quad t>0,\,x\in \R^d.
$$
So, if we denote by $\psi$ an eigenfunction of 
$A_{\min}$ associated to the eigenvalue $\lambda$, then H\"older's inequality implies
\begin{eqnarray*}
e^{\lambda \frac{t}{2}} |\psi(x)|&=& |T_2(t/2)\psi(x)|\\
&\le & \int_{\R^d} p\left(\frac{t}{2},x,y\right)|\psi(y)|\,dy\\
&\le & \left(\int_{\R^d}p\left(\frac{t}{2},x,y\right)^2dy\right)^{\frac{1}{2}}\|\psi \|_2\\
&=&p(t,x,x)^{\frac{1}{2}}\|\psi \|_2
\end{eqnarray*}
for any $t>0$ and any $x\in \R^d$. 
Therefore, if we normalize $\psi$, i.e. $\|\psi\|_2=1$, then 
$$|\psi(x)|\le e^{-\lambda \frac{t}{2}} p(t,x,x)^{\frac{1}{2}},\quad t>0,\,x\in \R^d.$$
So, by Theorem \ref{Thm: Kernel estimates in case of polynomially growing diffusions} and Theorem \ref{thm:5-2} we have
\begin{cor}
Let $\psi$ be any normalized eigenfunction of $A_{\min}$. Then,
\begin{enumerate}
\item in the case of polynomially growing coefficients, i.e., 
$Q(x)=(1+|x|^m_*)I$ and $V(x)=|x|^s$, where $s>|m-2|$ and $m> 0$, we have
\[
|\psi(x)| \le c_1e^{-c_2 |x|_*^{\frac{s-m+2}{2}}},\quad x\in \R^d,
\]
for some constants $c_1,c_2>0$;
\item in the case of exponentially growing coefficients, i.e. $Q(x)=e^{\abs{x}^m}I$ and $V(x)=e^{\abs{x}^s}$, where $2\leq m<s$, we have
\[
|\psi(x)| \le c_1\exp\left(-c_2 \int_0^{\abs{x}_*} e^{\frac{\tau^\beta}{2}}\, d\tau\right),\quad x\in \R^d,
\]
for some constants $c_1,c_2>0$.
\end{enumerate}
\end{cor}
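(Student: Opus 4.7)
The plan is to combine the pointwise inequality
\[
|\psi(x)|\le e^{-\lambda t/2}\, p(t,x,x)^{1/2},\qquad t>0,\ x\in\R^d,
\]
derived for any normalized eigenfunction $\psi$ of $A_{\min}$ with eigenvalue $\lambda$ in the paragraph immediately preceding the corollary, with the kernel estimates of Theorems \ref{Thm: Kernel estimates in case of polynomially growing diffusions} and \ref{thm:5-2}. Since both kernel bounds are already in \emph{symmetric product form} in $x$ and $y$, setting $y=x$ produces a clean pointwise control on $p(t,x,x)$, and taking the square root gives the desired decay rate for $\psi$.

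For the polynomial case I would fix any convenient time, say $t_0=1$, and apply Theorem \ref{Thm: Kernel estimates in case of polynomially growing diffusions} with $y=x$ to obtain
\[
p(t_0,x,x)\le C\,t_0^{1-\frac{\alpha(2m\vee s)}{s-m+2}k}\,\exp\!\bigl(-\varepsilon\, t_0^\alpha |x|_*^{(s-m+2)/2}\bigr).
\]
Taking the square root, multiplying by $e^{-\lambda t_0/2}$ and absorbing all $t_0$- and $\lambda$-dependent factors into a single constant $c_1$ yields
\[
|\psi(x)|\le c_1\, e^{-c_2|x|_*^{(s-m+2)/2}},\qquad c_2:=\tfrac{\varepsilon}{2}\,t_0^\alpha,
\]
which is exactly claim (a).

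For the exponential case I would proceed identically: fix $t_0\in(0,1]$, apply Theorem \ref{thm:5-2} with $y=x$ to get
\[
p(t_0,x,x)\le C\, t_0^{1-k/2}\exp(Ckt_0^{-\alpha})\exp\!\left(-\varepsilon\, t_0^\alpha\!\int_0^{|x|_*}e^{\tau^\beta/2}\,d\tau\right),
\]
then take the square root and multiply by $e^{-\lambda t_0/2}$. Absorbing the $x$-independent prefactor into $c_1$ gives claim (b) with $c_2=\tfrac{\varepsilon}{2}t_0^\alpha$.

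There is essentially no analytic obstacle: both nontrivial ingredients (the $L^2$--$L^\infty$ trick yielding $p(t,x,x)^{1/2}$ as a pointwise envelope, and the diagonal decay of $p(t,x,x)$) are already established in the paper. The only care needed is purely bookkeeping of constants, namely verifying that the $t_0$-dependent prefactors from the two kernel estimates are finite (which is clear since we fix $t_0>0$), so that $c_1,c_2>0$ come out independent of $x$.
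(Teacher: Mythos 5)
Your proposal is correct and is essentially identical to the paper's own argument: the paper derives the envelope $|\psi(x)|\le e^{-\lambda t/2}p(t,x,x)^{1/2}$ via the semigroup law, symmetry of $p$ and H\"older, and then simply invokes Theorems \ref{Thm: Kernel estimates in case of polynomially growing diffusions} and \ref{thm:5-2} on the diagonal. The only cosmetic point is that both kernel theorems are stated for $t\in(0,1)$, so you should fix $t_0\in(0,1)$ (e.g.\ $t_0=\tfrac12$) rather than $t_0=1$ in the polynomial case.
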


\end{document}